\def\P{\mathbb P}
\def\R{\mathbb R}
\def\E{\mathbb E}
\def\Q{\mathbb Q}
\def\E{\mathbb E}
\def\N{\mathbb N}
\def\cal{\mathcal}
\title{L$^{p}$-solution of reflected generalized BSDEs with non-Lipschitz coefficients}
\abstract{In this paper, we continue in solving reflected generalized backward
stochastic differential equations (RGBSDE for short) and fixed terminal time with use some new technical aspects of
the stochastic calculus related to the reflected generalized BSDE.
Here, existence and uniqueness of solution is proved under the non-Lipschitz condition on the coefficients.  }
\keywords{Reflected generalized backward stochastic differential equations; $p$-integrable data, non-Lipschitz coefficient}
\begin{document}
\section{Introduction}
The study of nonlinear backward stochastic differential equations (BSDEs, in short) was initiated by Pardoux and Peng \cite{PP1}. Mainly motivated by financial problems (see e.g. the survey article by El Karoui et al.\,\cite{Kal}), stochastic control and stochastic games (see the works by Hamad\`{e}ne and Lepeltier \cite{HL} and references therein ), the theory of BSDEs was developed at high speed during the 1990. These equations also provide
probabilistic interpretation for solutions to both elliptic and parabolic nonlinear partial differential equations (see Pardoux and Peng \cite{PP2},
Peng \cite{PE}). Indeed, coupled with a forward SDE, such BSDE's give an extension of the celebrate Feynman-Kac formula to nonlinear case.

In  order to provide a probabilistic representation for solution of parabolic or elliptic semi-linear PDEs with Neumann boundary condition, Pardoux and Zhang \cite{PZ} introduced the so-called generalized BSDEs. This equation involves the integral with respect to an increasing process.

El-Karoui et al. \cite{Kal1} have introduced the notion of reflected BSDEs (RBSDEs, in short). Actually, it is a BSDE, but one of the components of the solution is forced to stay above a given barrier. Since then, many others results on the RBSDEs have been established (see \cite{H,HML} and references therein) . In El-Karoui et al. \cite{Kal1}, the RBSDEs also provided a probabilistic formula for the viscosity solution of an obstacle problem for a parabolic PDEs.

Following this way, Ren et al \cite{Ral} have introduced the notion of reflected generalized BSDEs (RGBSDE, in short). They connected it to the obstacle problem for PDEs with Neumann boundary condition. More precisely, let consider the following RGBSDE: for $0\leq t\leq T$,
\begin{eqnarray}
&&(i)\, Y_{t}=\xi+\int_{t}^{T}f(s,Y_{s},Z_{s})ds+\int_{t}^{T}g(s,Y_{s})dG_{s}
-\int_{t}^{T}Z_{s}dW_{s}+K_{T}-K_{t}\nonumber\\
&&(ii)\, Y_{t}\geq S_{t}\label{aa}\\
&&(iii)\,K\, \mbox{is a non-decreasing process such that}\, K_0=0 \,\mbox{and}\,\int_{0}^{T}(Y_{t}-S_{t})dK_{t}=0.\nonumber
\end{eqnarray}

They proved under suitable conditions on the data the existence and uniqueness of the solution $(Y,Z,K)$. The increasing process $K$ is introduced to pushes the component $Y$ upwards so that it may remain above the obstacle process $S$. In particular, condition $(iii)$ means that the push is minimal and is done only when the constraint is saturated i.e. $Y_t< S_t$. In practice (finance market for example), the process $K$ can be regarded as the subsidy injected by a government in the market to allow  the price process $Y$ of a commodity  (coffee, by example) to remain above a threshold price process $S$.

In the Markovian framework, the RGBSDE $(\ref{aa})$ is combined with the following reflected forward SDE: for every $
(t,x)\in \lbrack 0,T]\times \overline{\Theta }$ and $s\in \lbrack t,T]$
\begin{equation*}
\left\{
\begin{array}{lll}
X_{s}^{t,x} & = & x+\displaystyle\int_{t}^{s\vee t}b(X_{r}^{t,x})dr+%
\displaystyle\int_{t}^{s\vee t}\sigma (X_{r}^{t,x})dW_{r}+\displaystyle%
\int_{t}^{s\vee t}\nabla \psi (X_{r}^{t,x})dG_{r}^{t,x},~s\geq 0 \\
X_{s}^{t,x} & \in & \overline{\Theta }\;\mbox{ and }\;G_{s}^{t,x}=%
\displaystyle\int_{t}^{s\vee t}1_{\left\{ X_{r}^{x}\in \partial \Theta
\right\} }dG_{r}^{t,x},%
\end{array}
\right.  \label{bb}
\end{equation*}

where $G_{.}^{t,x}$ is an increasing process and $\psi \in \mathcal{C}_{b}^{2}(%
\mbox{I\hspace{-.15em}R}^{d})$ characterize $\Theta $ and $\partial
\Theta $ as follows:
\begin{equation*}
\Theta =\{x\in \mathbb{R}^{d}:\;\;\psi (x)>0\}\quad \quad \mbox{and}\qquad
\qquad \partial \Theta =\{x\in \mathbb{R}^{d}:\;\;\psi (x)=0\}.
\end{equation*}

Assuming the data in the form $\xi =l(X_{T}^{t,x})$, $%
S_{s}=h(s,X_{s}^{t,x})$, $f(s,y,z)=f(s,X_{s}^{t,x},y,z),\mbox{ }$ and $%
g(s,y)=g(s,X_{s}^{t,x},y),$ the RGBSDE $(\ref{aa})$ becomes: for any fixed $t\in \lbrack
0,T]$
\begin{equation}
\label{C1}\left\{
\begin{array}{l}
\left( i\right) \mbox{ }Y_{s}^{t,x}=l(X_{T}^{t,x})+\displaystyle%
\int_{s}^{T}f(r,X_{r}^{t,x},Y_{r}^{t,x},Z_{r}^{t,x})dr+\displaystyle%
\int_{s}^{T}g(r,X_{r}^{t,x},Y_{r}^{t,x})dG_{r}^{t,x} \\
\\
~~\ \ \ \ ~~~~~\ ~~\ \ \ ~\ \ \ \ \ -\displaystyle%
\int_{s}^{T}Z_{r}^{t,x}dW_{r}+K_{T}^{t,x}-K_{s}^{t,x},\mbox{
}s\in \lbrack t,T] \\
\\
\left( ii\right) \mbox{ }Y_{s}^{t,x}\geq h(s,X_{s}^{t,x}),\mbox{
} \; a.s., \forall\ s\in \lbrack t,T] \\
\\
\left( iii\right) \mbox{ }K^{t,x}\mbox{ is a non-decreasing
process such
that }K_{0}^{t,x}=0\mbox{ and }\displaystyle%
\int_{t}^{T}(Y_{s}^{t,x}-h(s,X_{s}^{t,x}))dK_{s}^{t,x}=0, \ a.s.,
\end{array}%
\right.
\end{equation}

and gives a probabilistic
interpretation of the following type of obstacle problem for a partial
differential equation with nonlinear Neumann boundary condition:
\begin{equation*}
\left\{
\begin{array}{l}
\min \left\{ u\left( t,\,x\right) -h\left( t,\,x\right) ,\right. \\
\\
\left. -\frac{\partial u}{\partial t}\left( t,\,x\right) -(L
u)\left( t,\,x\right) -f(s,\,x,\,u\left( t,\,x\right) ,\,\left( \nabla
u\left( t,\,x\right) \right) ^{\ast }\sigma \left( t,\,x\right) )\right\} =0,
\\
\\
\left( t,\,x\right) \in \left[ 0,\,T\right] \times \Theta \\
\\
\frac{\partial u}{\partial n}\left( t,\,x\right) +g\left( t,\,x,\,u\left( t,\,x\right) \right) =0,
\mbox{ }\left( t,\,x\right) \in \left[ 0,\,T\right] \times \partial \Theta
\\
\\
u\left( T,\,x\right) =l\left( x\right) ,\mbox{ }x\in \overline{\Theta },
\end{array}%
\right.  \label{DD}
\end{equation*}
where $L$ is the infinitesimal generator corresponding to the diffusion process $X^{x}$ and $\frac{\partial}{\partial n}(.)=\langle\nabla\psi,\nabla (.)\rangle$.

Apart the work of El Karoui et al. \cite{Kal} and Briand et al. \cite{BDPS} in the case of standard BSDEs, there has been relatively few papers which deal
with the problem of existence and/or uniqueness of the solution for BSDEs and RBSDEs in the case when the coefficients are not square integrable.  This limits the scope for several applications (finance, stochastic control,  stochastic games, PDEs, etc,$\cdot\cdot$). To correct this shortcoming, Hamad\`{e}ne and Popier \cite{HP} show that if $\xi, \sup_{0\leq t\leq T} (S^+_t )$ and $\int_0^T |f(t, 0, 0)|dt$ belong to $L^p$ for some $p\in]1, 2[$, then the RBSDEs with one reflecting barrier associated with $(f,\ g=0,\ \xi,\ S)$ has a unique solution. They prove existence and uniqueness of the solution in using penalization and Snell envelope of processes methods. In a previous works, Aman \cite{Aman} give the similar result for a class of RGBSDEs $(\ref{aa})$ with Lipschitz condition on the coefficients by used the $L^{\infty}$-approximation. In this paper, we extend the previous result, assuming that  in this case coefficients are non-Lipschitz. The rest of the paper is organized as follows. The next section contains all the notations, assumptions and a priori estimates. Section 3 is devoted to existence and uniqueness result in $L^{p}$,\, $p\in(1, 2)$ when the coefficients are non-Lipschitz.

\section{ Preliminaries}
\setcounter{theorem}{0} \setcounter{equation}{0}
\subsection{Assumptions and basic notations}
First of all, $W=\{W_{t}\}_{t>0}$ is a standard Brownian motion with values in $\R^d$ defined on
some complete probability space $\left(\Omega ,\mathcal{F},\P\right)$. $\{\mathcal{ F}_{t}\}_{t\geq 0}$ is the augmented natural filtration
of $W$ which satisfies the usual conditions. In this paper, we will always use this filtration.
In most of this work, the stochastic processes will be defined for $t\in[0,T]$, where
$T$ is a positive real number, and will take their values in $\R$.

For any real $p>0$,
let us define the following spaces:\newline
$\mathcal{S}^{p}(\R)$ denotes set of $\R$-valued, adapted
c\`{a}dl\`{a}g processes $
 \{X_{t}\}_{t\in \lbrack 0,T]}$ such that
\begin{equation*}
\|X\|_{\mathcal{S}^{p}}=\E\left( \sup\limits_{0\leq t\leq
T}|X_{t}|^{p}\right) ^{1\wedge \frac{1}{p}}<+\infty,
\end{equation*}
and $\mathcal{M}^{p}(\R^{d})$ is the set of  predictable processes
$\{X_{t}\}_{t\in \lbrack 0,T]}$  such that
\begin{eqnarray*}
\|X\|_{\mathcal{M}^{p}}=\E \left[ \left(
\int_{0}^{T}|X_{t}|^{2}dt\right) ^{\frac{p}{2}}\right] ^{1\wedge \frac{1}{p}%
}<+\infty .
\end{eqnarray*}
If $p\geq 1$, then $\|X\|_{\mathcal{S}^{p}}$ (resp
$\|X\|_{\mathcal{M}^{p}}$) is
a norm on $\mathcal{S}^{p}(\R)$ (resp. $\mathcal{M}
^{p}(\R^{d}))$ and these spaces are Banach spaces. But if $p\in
\left( 0,1\right) ,$ $\left( X,X^{\prime }\right) \longmapsto
\left\| X-X^{\prime }\right\| _{\mathcal{S}^{p}}$ (resp
$\left\|
X-X^{\prime }\right\| _{\mathcal{M}^{p}}$) defines a distance on $\mathcal{S}%
^{p}(\R),$ (resp. $\mathcal{M}^{p}(%
\R^{d})$) and under this metric, $\mathcal{S}^{p}(%
\R)$ (resp. $\mathcal{M}^{p}(\R^{d}))$ is complete.

Now let us give the following assumptions:

\begin{description}
\item  $\left( \text{\bf A1}\right) $ $\left( G_{t}\right) _{t\geq
0}$ is a continuous real valued increasing
$\mathcal{F}_{t}$-progressively measurable process with bounded
variation on $[0,T]$.

\item  $\left( \text{\bf A2}\right)$ Two functions $f:\Omega\times[0,T]\times \mbox{I\hspace{-.15em}R}\times\mbox{I\hspace{-.15em}R}^{d}\rightarrow \R$ and
$g:\Omega \times[0,T]\times \mbox{I\hspace{-.15em}R}\rightarrow\R$ for some constants $\beta <0,\ \lambda >0,\ \mu\in\R$ and for all $t\in[0,T],\, y, y'\in\R,\; z, z'\in\R^d$:

\begin{description}
\item $(i)\; y\longmapsto (f(t,y,z),g(t,y))$ is continuous for all $z,\ (t,\omega)$ a.e.,
\item $(ii)\; f(.,y,z)$ and $g(.,y)$ are progressively measurable,
\item $(iii)\; |f(t,y,z)-f(t,y,z^{\prime })|\leq \lambda |z-z^{\prime }|$,
\item $(iv)\; \left( y-y^{\prime}\right) \left( f(t,y,z)-f(t,y^{\prime },z)\right) \leq \mu|y-y^{\prime }|^{2}$,
\item $(v)\; |f(t,y,z)|\leq |f(t,0,0)|+M(|y|+|z|)$
\item $(vi)\;\left( y-y^{\prime}\right) \left( g(t,y)-g(t,y^{\prime })\right) \leq \beta |y-y^{\prime }|^{2}$,
\item $(vii)\; | g(t,y)|\leq  |g(t,0)|+M|y|$,
\item $(viii)\; \E \left[\left(\int_{0}^{T}|f(s,0,0)|ds\right)^{p}+ \left( \int_{0}^{T}|g(s,0)|dG_{s}\right)^{p}\right]<\infty$.
\end{description}

\item  $\left({\bf A3}\right)$ For any $r>0$, we define the process $\pi
 _{r}$ in $L^{p}\left(\left[ 0,T\right]\times \Omega
 ,m\otimes \P\right)$ by
 \begin{eqnarray*}
\pi _{r}(t)=\sup_{|y|\leq r}|f(t,y,0)-f(t,0,0)|.
\end{eqnarray*}
\item  $\left( \text{\bf A4}\right)\xi $ is a
$\mathcal{F}_{T}$-measurable variable such that $\E(|\xi
|^{p})<+\infty$.

\item  $\left( \text{\bf A5}\right)$ There exists a barrier $\left( S_{t}\right) _{t\geq
0}$ which is a continuous, progressively
measurable, real-valued process satisfying:
\begin{description}
\item $(i)\; \E\left( \sup_{0\leq t\leq T}(S_{t}^{+})^{p}\right) <+\infty$,
\item $(ii)\; S_{T}\leq \xi\;\;\;\;\; \P$- a.s.
\end{description}
\end{description}

Before of all, let us recall what we mean by a $L^{p}$-solution of
RGBSDEs.
\begin{definition}
A $L^{p}$-solution of RGBSDE associated to the
data $(\xi,f,g,S)$  is a triplet  $(Y_{t},Z_{t},K_{t})_{0\leq t \leq
T}$  of progressively measurable processes taking values in $\R
\times \R^{d} \times\R$ and satisfying:
\begin{description}
\item $\left(i\right)$  $Y$  is a continuous process, \item
$\left(ii\right)$\begin{eqnarray}
Y_{t}=\xi
+\int_{t}^{T}f(s,Y_{s},Z_{s})ds+\int_{t}^{T}g(s,Y_{s})dG_{s}
-\int_{t}^{T}Z_{s}dW_{s}+K_{T}-K_{t},\label{RGBDSEinitial}
\end{eqnarray}
 \item $\left(iii\right)$\
$Y_{t}\geq S_{t}$ \;\; a.s., \item $\left(iv\right)$ $\E\left(\sup_{0\leq
t\leq T}|Y_{t}|^{p}+\left(\int_{0}^{T}|Z_{s}|^{2}ds\right)^
{p/2}\right)<+\infty$, \item$\left(v\right)$\ $K$ is a non-decreasing
process such that $K_{0}=0$ and
$\int_{0}^{T}(Y_{s}-S_{s})dK_{s}=0$,\;\; a.s.
\end{description}
\end{definition}

\subsection{A priori estimates}
In this paragraph, we state some estimates for solution of RGBSDE associated to $\left( \xi ,f,g,S\right) $ in $L^{p}$ when $p>1$ like in \cite{Aman}. But the difficulty here comes from the facts the function $f$ is not supposed to be Lipschitz continuous. Let us give the notation $\widehat{x}=|x|^{-1}x\mathbf{1}_{\{x\neq
0\}}$ introduced in \cite{BDPS} that will play an important role in the sequel.

\begin{lemma}
Assume that $(Y, Z)\in\mathcal{S}^{p}(\R)\times\mathcal{M}^{p}(\R^{d})$ is a solution of the following
BSDE:
\begin{eqnarray}
Y_{t}=\xi
+\int_{t}^{T}\tilde{f}(s,Y_{s},Z_{s})ds+\int_{t}^{T}\tilde{g}(s,Y_{s})dG_{s}
-\int_{t}^{T}Z_{s}dW_{s}+A_{T}-A_{t},\;\; 0\leq t\leq T,
\end{eqnarray}
where

\begin{description}
\item $(i)\; \tilde{f}$ and $\tilde{g}$ are functions which satisfy assumptions\,$({\bf A2})$,
\item $(ii)\, \P$ a.s., the process $(A_t)_{0\leq t\leq T}$ is of bounded variation type.
\end{description}

Then for any $0 \leq t \leq T$ we have:
\begin{eqnarray*}
&&|Y_{t}|^{p}+c(p)\int_{t}^{T}|Y_{s}|^{p-2}\mathbf{1}_{\{Y_{s}\neq 0\}}
|Z_s|^{2}ds\\
&\leq &|\xi |+p\int_{t}^{T}|Y_{s}|^{p-1}\widehat{Y}_{s}\ \tilde{f}\left(
s,Y_{s},Z_{s}\right) ds+p\int_{t}^{T}|Y_{s}|^{p-1}
\widehat{Y}_{s}\ \tilde{g}\left( s,Y_{s}\right)  dG_{s} \\
&&+p\int_{t}^{T}|Y_{s}|^{p-1} \widehat{Y}_{s}\ dA_{s}
-p\int_{t}^{T}|Y_{s}|^{p-1} \widehat{Y}_{s}\ Z_{s}dW_{s} .
\end{eqnarray*}
with $c(p)=p\left[ (p-1)\wedge 1\right] /2.$
\end{lemma}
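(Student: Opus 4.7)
The strategy is to apply It\^o's formula not directly to $y\mapsto|y|^p$, which is only $C^1$ when $p\in(1,2)$, but to the smoothed functional $u_\varepsilon(y)=(|y|^2+\varepsilon^2)^{p/2}$ for $\varepsilon>0$, and then to let $\varepsilon\downarrow 0$. This is the regularisation used by Briand, Delyon, Pardoux and Stoica, and it is exactly what produces the indicator $\mathbf{1}_{\{Y_s\neq 0\}}$ and the notation $\widehat{Y}_s$ in the conclusion.

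First I compute $u'_\varepsilon(y)=py(|y|^2+\varepsilon^2)^{(p-2)/2}$ and $u''_\varepsilon(y)=p(|y|^2+\varepsilon^2)^{(p-4)/2}\bigl[(p-1)y^2+\varepsilon^2\bigr]$. Since $Y$ is a continuous semimartingale whose finite-variation part is built from $\tilde{f}\,ds$, $\tilde{g}\,dG_s$ and $dA_s$ and whose martingale part has quadratic variation $|Z_s|^2 ds$, It\^o's formula applied to $u_\varepsilon(Y_s)$ on $[t,T]$ yields
\begin{eqnarray*}
u_\varepsilon(Y_t)+\frac{1}{2}\int_t^T u''_\varepsilon(Y_s)|Z_s|^2 ds
&=& u_\varepsilon(\xi)+\int_t^T u'_\varepsilon(Y_s)\tilde{f}(s,Y_s,Z_s)ds\\
&&+\int_t^T u'_\varepsilon(Y_s)\tilde{g}(s,Y_s)dG_s+\int_t^T u'_\varepsilon(Y_s)dA_s\\
&&-\int_t^T u'_\varepsilon(Y_s)Z_s\,dW_s.
\end{eqnarray*}
The key elementary inequality is $(p-1)y^2+\varepsilon^2\geq [(p-1)\wedge 1](y^2+\varepsilon^2)$, immediate by distinguishing $1<p\leq 2$ and $p\geq 2$; it gives $u''_\varepsilon(y)\geq 2c(p)(|y|^2+\varepsilon^2)^{(p-2)/2}$, and substituting this lower bound into the identity above produces the required inequality with $\varepsilon>0$ retained throughout.

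It then remains to pass to the limit $\varepsilon\downarrow 0$. Pointwise one has $u_\varepsilon(y)\to|y|^p$ and $u'_\varepsilon(y)\to p|y|^{p-1}\widehat{y}$, together with the uniform control $|u'_\varepsilon(y)|\leq p(|y|^2+\varepsilon^2)^{(p-1)/2}$. The $ds$, $dG$ and $dA$ integrals converge by dominated convergence, using $Y\in\mathcal{S}^p$, the growth assumptions \textbf{(A2)}$(v),(vii)$, and the finite-variation character of $G$ and $A$; the stochastic integral converges in probability after a standard localization. The step I expect to be the main obstacle is the quadratic-variation term, because the weight $(|y|^2+\varepsilon^2)^{(p-2)/2}$ explodes at $y=0$ when $p\in(1,2)$ and dominated convergence is therefore unavailable. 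Here Fatou's lemma is forced and delivers
\[
\liminf_{\varepsilon\downarrow 0}\int_t^T(|Y_s|^2+\varepsilon^2)^{(p-2)/2}|Z_s|^2\,ds \;\geq\; \int_t^T|Y_s|^{p-2}\mathbf{1}_{\{Y_s\neq 0\}}|Z_s|^2\,ds,
\]
which is exactly the expression appearing on the left-hand side of the statement. (The $|\xi|$ on the right-hand side of the statement should of course be read as $|\xi|^p$, as is clear from the It\^o computation above.)
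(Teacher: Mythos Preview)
The paper does not actually prove this lemma; it is stated without proof, the notation $\widehat{x}=|x|^{-1}x\mathbf{1}_{\{x\neq 0\}}$ being attributed to Briand, Delyon, Hu, Pardoux and Stoica \cite{BDPS}, from which the result is implicitly borrowed. Your argument is correct and is exactly the regularisation technique of that reference: apply It\^o's formula to $u_\varepsilon(y)=(|y|^2+\varepsilon^2)^{p/2}$, bound $u''_\varepsilon$ from below via the elementary inequality $(p-1)y^2+\varepsilon^2\geq[(p-1)\wedge 1](y^2+\varepsilon^2)$, and then let $\varepsilon\downarrow 0$, using dominated convergence on the finite-variation and stochastic integrals and Fatou's lemma on the quadratic-variation term to produce the weight $|Y_s|^{p-2}\mathbf{1}_{\{Y_s\neq 0\}}$. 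You are also right that the $|\xi|$ on the right-hand side of the statement is a misprint for $|\xi|^p$.
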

We now show how to control the process $Z$ in terms of the data and the process $Y$.
\begin{lemma}\label{L2}
Let assume $\left({\bf A1}\right)$-$\left(
{\bf A4}\right) $ hold and let $\left( Y,Z,K\right)$ be the solution of RGBSDE associated to $\left( \xi
,f,g,S\right) .$ If $Y\in \mathcal{S}^{p}$ then $Z$ belong to $\mathcal{M}%
^{p}$ and there exists a real constant $C_{p,\lambda}$ depending only on $p$ and $\lambda$
such that,
\begin{eqnarray*}
\E\left[ \left( \int_{0}^{T}|Z_{r}|^{2}dr\right)
^{p/2}\right]
  &\leq &C_{p,\lambda}%
\E\left\{ \sup_{0\leq t\leq T}|Y_{t}|^{p}+\left(
\int_{0}^{T}f^{0}_{r}dr\right) ^{p}\right. \\
&&+\left. \left( \int_{0}^{T}g^{0}_{r}dG_{r}\right)
^{p}+\sup_{0\leq t\leq T}|S^{+}_{t}|^{p}\right\},
\end{eqnarray*}
where $f^{0}_{r}=|f(r,0,0)|$ and $g^{0}_{r}=|g(r,0)|$.
\end{lemma}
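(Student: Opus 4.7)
My approach is the classical energy estimate for $Z$. I would apply It\^o's formula to $|Y|^{2}$ on $[t,T]$, which produces $\int_{t}^{T}|Z_{s}|^{2}ds$ with a good sign on the left,
\begin{eqnarray*}
|Y_{t}|^{2}+\int_{t}^{T}|Z_{s}|^{2}ds &=& |\xi|^{2}+2\int_{t}^{T}Y_{s}f(s,Y_{s},Z_{s})ds+2\int_{t}^{T}Y_{s}g(s,Y_{s})dG_{s}\\
&&+2\int_{t}^{T}Y_{s}dK_{s}-2\int_{t}^{T}Y_{s}Z_{s}dW_{s},
\end{eqnarray*}
and then dominate each term on the right using the growth conditions $({\bf A2})(v)$ and $({\bf A2})(vii)$, the Skorokhod minimality of $K$, Young's inequality, and Burkholder--Davis--Gundy, keeping the $|Z|^{2}$-contributions small enough to be absorbed on the left.

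Concretely, $({\bf A2})(v)$ yields $2|Y_{s}f(s,Y_{s},Z_{s})|\leq 2|Y_{s}|f^{0}_{s}+2M|Y_{s}|^{2}+2M|Y_{s}||Z_{s}|$, and Young's inequality $2ab\leq \varepsilon a^{2}+\varepsilon^{-1}b^{2}$ absorbs the cross term into $\frac{1}{4}\int|Z|^{2}ds$ from the left for $\varepsilon$ small, the remainder being of the announced form. The analogous step with $({\bf A2})(vii)$ handles the $dG$-term and produces the $(\int g^{0}dG)^{p}$ contribution after raising to power $p/2$. The reflection term is handled via the minimality of $K$: since $dK$ is supported on $\{Y=S\}$,
\begin{eqnarray*}
\int_{t}^{T}Y_{s}dK_{s}=\int_{t}^{T}S_{s}dK_{s}\leq \sup_{s\leq T}S_{s}^{+}\,(K_{T}-K_{t}),
\end{eqnarray*}
and a further Young splitting yields $\eta^{-1}\sup(S^{+})^{2}+\eta(K_{T}-K_{t})^{2}$. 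To bound $(K_{T}-K_{t})^{2}$ I would read $K_{T}-K_{t}$ off the equation itself, square, and use the growth assumptions; this produces contributions of the announced form plus fresh $|\xi|^{2}$, $\sup|Y|^{2}$, and stochastic-integral terms.

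Raising to the power $p/2$, taking the supremum over $t\in[0,T]$ and then expectations, and applying BDG to $\int YZdW$ (bounded by $\frac{1}{2}\E[\sup|Y|^{p}]+c\,\E[(\int|Z|^{2}ds)^{p/2}]$) and to the stochastic integral arising from $K_{T}-K_{t}$ completes the bound after rearrangement. The main obstacle is the circularity introduced by the reflection: estimating $\int YdK$ forces the appearance of $(K_{T}-K_{t})^{2}$, which, through the martingale part of the equation read backwards, reintroduces $\int|Z|^{2}ds$ on the right. The whole argument hinges on tuning the Young parameters $\varepsilon,\eta,\ldots$ so that, after both BDG applications, the total coefficient in front of $\E[(\int_{0}^{T}|Z_{s}|^{2}ds)^{p/2}]$ on the right-hand side is strictly less than the coefficient standing on the left, permitting the $|Z|^{2}$-term to be absorbed. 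This is the usual \emph{a priori}-estimate bookkeeping and is the only delicate step.
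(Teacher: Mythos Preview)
Your strategy---It\^o on $|Y|^{2}$, dominate the drift, handle $\int Y\,dK$ through the Skorokhod condition and an estimate of $K$ read off the equation, raise to the $p/2$, then BDG and absorb---is exactly the route the paper takes. Two points of divergence are worth flagging. First, the paper works with an exponential weight $e^{\alpha r}$ and exploits the monotonicity/Lipschitz-in-$z$ structure $({\bf A2})(iii),(iv),(vi)$ rather than the raw linear growth $(v),(vii)$ you invoke; this is what lets the $\int|Y|^{2}dr$ and $\int|Y|^{2}dG$ contributions be killed (by choosing $\alpha$ large and using $\beta<0$) so that the final constant depends only on $p$ and $\lambda$ as stated, whereas your route would produce a constant involving $M$, $T$ and $\|G_{T}\|_{\infty}$. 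Second, the paper localises with the stopping times $\tau_{n}=\inf\{t:\int_{0}^{t}|Z_{r}|^{2}dr\geq n\}\wedge T$ before taking expectations and only passes to the limit via Fatou at the end; you omit this, but it is not optional here, since a priori $Z\in\mathcal{M}^{p}$ is precisely what is being proved and without localisation neither the martingale property of $\int YZ\,dW$ nor the finiteness of the right-hand side after taking expectations is justified.
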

\begin{proof}
For each integer $n\geq 1$ let introduce
\begin{eqnarray*}
\tau_{n}=\inf \left \{t\in [0,T],\int_{0}^{t}|Z_{r}|^2dr \geq n \right \}
 \wedge T.
\end{eqnarray*}
The sequence $(\tau_n)_{n\geq 0}$ is of stationary type since the process $Z$ belongs to $\mathcal{M}^{p}$ and then $\int^{T}_{0} |Z_s|^{2}ds <\infty,\, \P$- a.s..
Next, for any $\alpha>0$, using It\^o's formula and assumption $(\bf A2)$, we get
\begin{eqnarray*}
&&|Y_{0}|^{2}+\int_{0}^{\tau_{n}}e^{\alpha r}|Z_{r}|^{2}dr+|\beta|\int_{0}^{\tau_{n}} e^{\alpha r}|Y_{r}|^{2}dG_r\nonumber\\
&\leq&e^{\alpha \tau_{n}}|Y_{\tau_{n}}|^{2}+2\sup_{0\leq t\leq T}e^{\alpha t}|Y_{t}|\times\left[\int_{0}^{\tau_{n}} (f^{0}_{r}dr+g^{0}_{r}dG_r)\right]
+(2\lambda+\varepsilon^{-1}\lambda-\alpha)\int_{0}^{\tau_{n}} e^{\alpha r}|Y_{r}|^{2}dr\\
&&+\varepsilon\int_{0}^{\tau_{n}} e^{\alpha r}|Z_r|^{2}dr + \frac{1}{\varepsilon}\sup_{0\leq t\leq \tau_{n}}e^{2\alpha t}|Y_{t}|^{2}+\varepsilon|K_{\tau_{n}}|^{2} - 2\int_{0}^{\tau_{n}}e^{\alpha r}Y_{r}\ Z_{r}dW_{r},
\end{eqnarray*}
in virtue of the standard inequality $2ab\leq \frac{1}{\varepsilon}a^{2}+\varepsilon b^{2}$ for any $\varepsilon>0$ and since $\beta<0$.

But
\begin{eqnarray}
|K_{\tau_{n}}|^{2}&\leq& C_{\lambda}\left\{|Y^{2}_{0}|+|Y^{2}_{\tau_{n}}|+\left(\int_{0}^{\tau_{n}}f^{0}_{r} dr
\right)^{2}+\int_{0}^{\tau_{n}}|Y_{r}|^{2}dr+\int_{0}^{\tau_{n}}|Y_{r}|^{2}dG_r\right.\nonumber\\
&&\left.+\left(\int_{0}^{\tau_{n}}g^{0}_{r} dG_{r}\right)^{2}+\int_{0}^{\tau_{n}}|Z_{r}|^{2}dr+\left|\int_{0}^{\tau_{n}}Z_rdW_r\right|
\right\}\label{estK}
\end{eqnarray}
so that we have:
\begin{eqnarray*}
&&(1-\varepsilon C_{\lambda})|Y_{0}|^{2}+(1-\varepsilon-\varepsilon C_{\lambda})\int_{0}^{\tau_{n}}e^{\alpha r}|Z_{r}|^{2}dr\nonumber\\
&&\leq (\varepsilon C_{\lambda}+e^{\alpha \tau_{n}})|Y_{\tau_{n}}|^{2}+(1+\varepsilon C_{\lambda})\left[\left(\int_{0}^{\tau_{n}} f^{0}_{r}dr\right)^{2}+\left(\int_{0}^{\tau_{n}}g^{0}_{r}dG_r\right)^{2}\right]\\
&&+(2\lambda+\varepsilon^{-1}\lambda-\alpha)\int_{0}^{\tau_{n}} e^{\alpha r}|Y_{r}|^{2}dr+(1+\frac{1}{\varepsilon})\sup_{0\leq t\leq \tau_{n}}e^{2\alpha t}|Y_{t}|^{2}\\
&&+\varepsilon C_{\lambda}\left|\int_{0}^{\tau_{n}}Z_{r}dW_{r}\right|+ 2\left|\int_{0}^{\tau_{n}}e^{\alpha r}Y_{r}\ Z_{r}dW_{r}\right|.
\end{eqnarray*}
Choosing now $\varepsilon$ small enough and $\alpha$ such that $2\lambda+ \varepsilon^{-1}\lambda-\alpha< 0$, we obtain:
\begin{eqnarray*}
\left(\int_{0}^{\tau_{n}}|Z_{r}|^{2}dr\right)^{p/2} &\leq&
C_{p,\lambda}\left\{\sup_{0\leq t\leq\tau_{n}}Y_{t}^{p}+\left(\int_{0}^{\tau_{n}}f^{0}_{r} dr\right)^{p}\right.\\
&&\left.+\left(\int_{0}^{\tau_{n}}g^{0}_{r} dG_{r}\right)^{p}+\left|\int_{0}^{\tau_{n}}e^{\alpha r}Y_{r}\ Z_{r}dW_{r}\right|^{p/2}\right\}. \label{a2}
\end{eqnarray*}
Next thanks to BDG's inequality it follows:
\begin{eqnarray*}
\E\left(\left|\int_{0}^{\tau_{n}}e^{\alpha r}Y_{r}Z_{r}dW_{r}
\right|^{p/2}\right)&\leq& d_{p}\E\left[ \left(\int_{0}^{\tau_{n}}
|Y_{r}|^{2}|Z_{r}|^{2}dr\right)^{p/4}\right]\nonumber \\
&\leq& \bar{C}_{p}\E\left[ \sup_{0\leq t\leq \tau_{n}}|Y_{t}|^{p/2}\left(\int_{0}^{\tau_{n}}
|Z_{r}|^{2}dr\right)^{p/4}\right]\nonumber\\
&\leq& \frac{\bar{C}_{p}^{2}}{\eta}\E\left(\sup_{0\leq t\leq \tau_{n}}|Y_{t}|^{p}\right)+
\eta\E\left(\int_{0}^{\tau_{n}}|Z_{r}|^{2}dr\right)^{p/2}.
\label{a3}
\end{eqnarray*}
Finally plugging the last inequality in the previous one, choosing $\eta$ small enough and
finally using Fatou's lemma we obtain the desired result.
\end{proof}

We will now establish an estimate for the processes $Y$ and $Z$. The difficulty comes from the fact that the function $y\mapsto |y|^p$ is not $\mathcal{C}^{2}$ since we work with $p\in (1,2)$. Actually we have:
\begin{lemma}
Assume $\left({\bf A1}\right)$-$\left({\bf A4}\right)$. Let
$\left(Y,Z,K)\right)$ be a solution of the RGBDSE
associated to the data $(\xi,f,g,S)$ where $Y$ belong to
$\mathcal{S}^{p}$. Then there exists a constant $C_{p,\lambda}$ depending
only on $p$ and $\lambda$ such that
\begin{eqnarray*}
\E\left\{\sup_{0\leq t \leq T}|Y_{t}|^{p}+
\left(\int_{0}^{T}|Z_{s}|^{2}ds\right)^{p/2}
\right\}&\leq& C_{p,\lambda}\E\left\{|\xi|^{p}+\left(\int_{0}^{T}f^{0}_{s}ds\right)^{p}\right.\\
&&+\left.\left(\int_{0}^{T}g^{0}_{s}dG_{s}\right)^{p}+ \sup_{0\leq
t \leq T}(S_{t}^{+})^{p} \right\}.
\end{eqnarray*}
\end{lemma}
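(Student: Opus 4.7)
The plan is first to establish the bound on $\E\sup_{t}|Y_t|^p$, and then to obtain the $Z$-bound as a direct consequence of Lemma~\ref{L2}. The starting point is the Itô-type inequality of the preceding lemma, applied with $\tilde f=f$, $\tilde g=g$ and $A_s=K_s$. I would split $f(s,Y_s,Z_s)$ via $f(s,Y_s,Z_s)-f(s,0,Z_s)$ and $f(s,0,Z_s)-f(s,0,0)$, controlled respectively by the monotonicity (iv) and the $z$-Lipschitz property (iii). This gives the pathwise bound $p|Y_s|^{p-1}\widehat Y_s f(s,Y_s,Z_s)\le p\mu|Y_s|^p+p\lambda|Y_s|^{p-1}|Z_s|+p|Y_s|^{p-1}f^0_s$. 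A weighted Young's inequality absorbs the cross term $|Y_s|^{p-1}|Z_s|$ into the coercive integral $c(p)|Y_s|^{p-2}\mathbf{1}_{\{Y_s\ne 0\}}|Z_s|^2$ on the left-hand side, leaving only a $C|Y_s|^p$ residue. The $g$-term is handled analogously via (vi) and (vii); since $\beta<0$, the leading contribution has the right sign and can be discarded.

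The delicate step is the contribution from $K$. Thanks to the Skorokhod minimality condition $\int_0^T(Y_s-S_s)\,dK_s=0$, the measure $dK_s$ is supported on $\{Y_s=S_s\}$, so on this set $|Y_s|^{p-1}\widehat Y_s\le (S_s^+)^{p-1}$ (and is non-positive when $Y_s\le 0$). Hence
\[
p\int_t^{T}|Y_s|^{p-1}\widehat Y_s\,dK_s \;\le\; \varepsilon\,K_T^p+C_{\varepsilon,p}\sup_{0\le s\le T}(S_s^+)^p.
\]
The factor $K_T^p$ is then estimated by isolating $K_T$ from equation (\ref{RGBDSEinitial}) and using (v) and (vii), in the spirit of estimate (\ref{estK}); this produces a bound of the form $\E K_T^p\le C\{\E\sup_t|Y_t|^p+\E|\xi|^p+\E(\int_0^T f^0_sds)^p+\E(\int_0^T g^0_sdG_s)^p+\E(\int_0^T|Z_s|^2ds)^{p/2}\}$, which feeds the $\E\sup|Y|^p$ and $Z$-terms back into the estimate.

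Next I would take $\sup_{0\le t\le T}$, apply the Burkholder--Davis--Gundy inequality to the martingale $\int_0^\cdot |Y_s|^{p-1}\widehat Y_s Z_sdW_s$ and split the resulting bound as $\eta\,\E\sup_t|Y_t|^p+C_\eta\,\E(\int_0^T|Z_s|^2ds)^{p/2}$ through a further Young step. Lemma~\ref{L2} is invoked to replace every $\E(\int_0^T|Z_s|^2ds)^{p/2}$ by $\E\sup_t|Y_t|^p$ plus the data. Choosing the three constants $\varepsilon$, $\eta$ and the coefficient in Young's inequality for $|Y|^{p-1}|Z|$ small enough, all $\E\sup_t|Y_t|^p$ contributions can be absorbed into the left-hand side. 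As in the proof of Lemma~\ref{L2}, the computation should be carried out on $[0,\tau_n]$ with the localizing stopping times, and the estimate extended to $[0,T]$ by Fatou's lemma. Once $\E\sup_t|Y_t|^p$ is controlled, reinjecting the bound into Lemma~\ref{L2} yields the $Z$-estimate and closes the proof.

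The main obstacle is the triangular coupling between $\E\sup|Y|^p$, $\E(\int_0^T|Z_s|^2ds)^{p/2}$ and $\E K_T^p$: each of the three quantities is controlled only in terms of (the others plus) the data, so the Young constants must be balanced simultaneously so that the cumulative coefficient hitting $\E\sup|Y|^p$ on the right-hand side is strictly less than one. The absence of any Lipschitz assumption on $f$ in $y$ does not in fact cause additional trouble here, because the a priori estimate uses only the monotonicity (iv) and the linear growth (v) -- assumption (\textbf{A3}) on the modulus $\pi_r$ will rather intervene in the existence part of the next section.
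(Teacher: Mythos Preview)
Your plan follows the paper's route almost step by step: apply the It\^o-type inequality, exploit (iv) and (iii) for $f$ and $\beta<0$ for $g$, bound the $dK$-integral through the Skorokhod condition, control $K_T^p$ from the equation, and close via BDG. Two points need attention.

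First, you do not explain how the residue $C\int_t^T|Y_s|^p\,ds$ is absorbed. The paper works with the weight $e^{p\alpha s}$ and takes $\alpha\ge \lambda+\lambda^2/(p-1)$ precisely to kill this integral; without this device (or a Gronwall step) the contribution $CT\,\E\sup_t|Y_t|^p$ need not be absorbable.

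Second, and this is where your argument may fail to close, your BDG splitting $\eta\,\E\sup_t|Y_t|^p+C_\eta\,\E\bigl(\int_0^T|Z_s|^2ds\bigr)^{p/2}$ followed by Lemma~\ref{L2} returns $C_\eta\, C_{p,\lambda}\,\E\sup_t|Y_t|^p$ to the right-hand side. Since $C_\eta\to\infty$ as $\eta\to 0$ while the constant $C_{p,\lambda}$ of Lemma~\ref{L2} is fixed, there is no reason for $\eta+C_\eta C_{p,\lambda}$ ever to drop below $1$: the triangular coupling you mention does not unwind with these particular choices. The paper avoids this by factoring $|Y_s|^{2p-2}|Z_s|^2=|Y_s|^p\cdot|Y_s|^{p-2}|Z_s|^2$ inside the BDG bound, landing on
\[
p\eta\,\E\sup_t|Y_t|^p+\frac{p}{\eta}\,\E\int_0^T|Y_s|^{p-2}\mathbf{1}_{\{Y_s\ne 0\}}|Z_s|^2\,ds,
\]
and then bounding the second integral by the inequality~$(\ref{estY})$ already obtained by taking expectation at $t=0$ \emph{before} taking the sup. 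The point is that in $(\ref{estY})$ the coefficient in front of $\E\sup_t|Y_t|^p$ is $(p-1)\gamma^{p/(p-1)}$ with $\gamma$ still free; one fixes $\eta=1/(2p)$ first and then chooses $\gamma,\varepsilon$ small so that the cumulative coefficient on $\E\sup_t|Y_t|^p$ is below $1$. With this modification (and the exponential weight), your outline becomes exactly the paper's proof.
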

\begin{proof}
For any $\alpha>0$, it from Lemma 2.2, together with assumption $({\bf A2})$ that
\begin{eqnarray*}
&&e^{p\alpha t}|Y_{t}|^{p}+
c(p)\int_{t}^{u}e^{p\alpha s}|Y_s|^{p-2}{\bf 1}_{\{Y_s\neq\ 0\}}|Z_{s}|^{2}ds\\
&\leq& e^{p\alpha u}|Y_u|^{p}+p(\lambda-\alpha)\int_{u}^{T}e^{p\alpha s}|Y_s|^{p}ds+p\int_{t}^{u}e^{p\alpha s}|Y_s|^{p-1}f^{0}_sds\\
&&+p\int_{t}^{u}e^{p\alpha s}|Y_s|^{p-1}g_{s}^{0}dG_s+p\lambda\int_{u}^{T}e^{p\alpha s}|Y_s|^{p-1}|Z_s|ds\\
&&+p\int_{t}^{u}e^{p\alpha s}|Y_s|^{p-1}\widehat{Y}_sdK_s-p\int_{t}^{u}e^{p\alpha s}|Y_s|^{p-1}\widehat{Y}_sZ_sdW_s.
\end{eqnarray*}
We have by Young's inequality
\begin{eqnarray*}
p\lambda|Y_s|^{p-1}|Z_s|\leq \frac{p\lambda^{2}}{p-1}|Y_s|^{p}+\frac{c(p)}{2}|Y_s|^{p-2}{\bf 1}_{\{Y_s\neq 0\}}|Z_s|^{2},
\end{eqnarray*}
and
\begin{eqnarray*}
p\int_{t}^{u}e^{p\alpha s}|Y_s|^{p-1}(f^{0}_sds+g^{0}_sdG_s)&\leq &(p-1)\gamma^{\frac{p}{p-1}}\sup_{0\leq s\leq u}|Y_s|^{p}\\
&&+\gamma^{-p}\left[\left(\int_{t}^{u}e^{p\alpha s}f^{0}_sds\right)^{p}+\left(\int_{t}^{u}e^{p\alpha s}g^{0}_sdG_s\right)^{p}\right]
\end{eqnarray*}
for any $\gamma>0$. Then plug the two last inequalities in the previous one, we obtain:
\begin{eqnarray*}
&&e^{p\alpha t}|Y_{t}|^{p}+
\frac{c(p)}{2}\int_{t}^{u}e^{p\alpha s}|Y_s|^{p-2}{\bf 1}_{\{Y_s\neq\ 0\}}|Z_{s}|^{2}ds\\
&\leq& e^{p\alpha u}|Y_u|^{p}+(p-1)\gamma^{\frac{p}{p-1}}\sup_{0\leq s\leq u}|Y_s|^{p}\\
&&+\gamma^{-p}\left[\left(\int_{t}^{u}e^{p\alpha s}f^{0}_sds\right)^{p}+\left(\int_{t}^{u}e^{p\alpha s}g^{0}_sdG_s\right)^{p}\right]\\
&&+p\left(\lambda+\frac{\lambda^{2}}{p-1}-\alpha\right)\int_{t}^{u}e^{p\alpha s}|Y_s|^{p}ds\\
&&+p\int_{t}^{u}e^{p\alpha s}|Y_s|^{p-1}\widehat{Y}_sdK_s-p\int_{t}^{u}e^{p\alpha s}|Y_s|^{p-1}\widehat{Y}_sZ_sdW_s.
\end{eqnarray*}
Next, the hypothesis related to increments of $K$ and $Y-S$ implies that
\begin{eqnarray*}
\int_{t}^{u}e^{p\alpha s}|Y_s|^{p-1}\widehat{Y}_sdK_s&\leq& \int_{t}^{u}e^{p\alpha s}|S_s|^{p-1}\widehat{S}_sdK_s\\
&\leq &\int_{t}^{u}e^{p\alpha s}(S_s^{+})^{p-1}dK_s\\
&\leq&\frac{p-1}{p}\frac{1}{\varepsilon^{\frac{p}{p-1}}}\left(\sup_{0\leq t\leq u}|S^{+}_t|^{p}\right)+\frac{1}{p}\varepsilon^{p}\left(\int_{t}^{u}e^{p\alpha s}dK_s\right)
\end{eqnarray*}
for any $\varepsilon>0$, so that choosing $\alpha$ such that
$\begin{array}{l}
\lambda+\frac{\lambda^{2}}{p-1}\leq\alpha
\end{array}
$
and put $u=T$, we get:
\begin{eqnarray}
&&\E\left(e^{p\alpha t}|Y_{t}|^{p}\right)+
\frac{c(p)}{2}\E\left(\int_{t}^{T}e^{p\alpha s}|Y_s|^{p-2}{\bf 1}_{\{Y_s\neq\ 0\}}|Z_{s}|^{2}ds\right)\nonumber\\
&\leq& \E(e^{p\alpha T}|\xi|^{p})+(p-1)\gamma^{\frac{p}{p-1}}\E\left(\sup_{0\leq s\leq T}|Y_s|^{p}\right)\nonumber\\
&&+\gamma^{-p}\E\left[\left(\int_{t}^{T}e^{p\alpha s}f^{0}_sds\right)^{p}+\left(\int_{t}^{T}e^{p\alpha s}g^{0}_sdG_s\right)^{p}\right]\label{estY}\\
&&+(p-1)\frac{1}{\varepsilon^{\frac{p}{p-1}}}\E\left(\sup_{0\leq t\leq T}|S^{+}_t|^{p}\right)+\frac{1}{p}\varepsilon^{p}\E\left(\int_{t}^{T}e^{p\alpha s}dK_s\right).\nonumber
\end{eqnarray}
On the other hand the predictable dual projection, Jensen's conditional inequality and together with Lemma \ref{L2} provide
\begin{eqnarray}
\E[(K_T-K_t)^{p}]\leq C_{\lambda,p}\E\left[\sup_{0\leq s\leq T}|Y_s|^{p}+\left(\int_{t}^{T}f_s^{0}ds\right)^{p}+\left(\int_{t}^{T}g_s^{0}dG_s\right)^{p}\right],
\label{estY1}
\end{eqnarray}
where $C_{\lambda,p}$ is a constant which depend on $p,\ \lambda$ and possibly $T$ which may change from line to another.

Coming back to inequality $(\ref{estY})$ and using BDG inequality we have
\begin{eqnarray*}
\E\sup_{0\leq t\leq T}e^{p\alpha t}|Y_{t}|^{p}&\leq& \E(e^{p\alpha T}|\xi|^{p})+(p-1)\frac{1}{\varepsilon^{\frac{p}{p-1}}}\E\left(\sup_{0\leq t\leq T}|S^{+}_t|^{p}\right)\\
&&+\{C_{\lambda,p}(\gamma^{\frac{p}{p-1}}+\varepsilon^{p})+p\eta\}\E\left(\sup_{0\leq t\leq T}|Y_t|^{p}\right)\\
&&+C_{\lambda,p}(\frac{1}{\gamma^{p}}+\varepsilon^{p})\E\left[\left(\int_{0}^{T}e^{p\alpha s}f^{0}_sds\right)^{p}+\left(\int_{0}^{T}e^{p\alpha s}g^{0}_sdG_s\right)^{p}\right]\\
&&+\frac{p}{\eta}\E\left(\int_{0}^{T}e^{p\alpha s}|Y_s|^{p-2}\widehat{Y}_s{\bf 1}_{\{Y_s\neq 0\}}|Z_s|^{2}ds\right)\\
&\leq&\left(1+\frac{2p}{c(p)\eta}\right)\E(e^{p\alpha T}|\xi|^{p})+\left(1+\frac{2p}{c(p)\eta}\right)(p-1)\frac{1}{\varepsilon^{\frac{p}{p-1}}}\E\left(\sup_{0\leq t\leq T}|S^{+}_t|^{p}\right)\\
&&+\left(1+\frac{2p}{c(p)\eta}\right)C_{\lambda,p}(\frac{1}{\gamma^{p}}+\varepsilon^{p})\E\left[\left(\int_{0}^{T}e^{p\alpha s}f^{0}_sds\right)^{p}+\left(\int_{0}^{T}e^{p\alpha s}g^{0}_sdG_s\right)^{p}\right]\\
&&+\left\{C_{\lambda,p}\left(1+\frac{2p}{c(p)\eta}\right)(\gamma^{\frac{p}{p-1}}+\varepsilon^{p})+p\eta\right\}\E\left(\sup_{0\leq t\leq T}|Y_t|^{p}\right)\\
\end{eqnarray*}
Finally it is enough to chose $\eta=\frac{1}{2p}$ and $\gamma,\,\varepsilon$ small enough to obtain the desired result.
\end{proof}
\begin{lemma}
Assume that $(f, g,\xi, S)$ and $(f',g', \xi',S')$ are two quadruplets satisfying assumptions $({\bf A1})$-$({\bf A4})$.
Suppose that $(Y, Z,K)$ is a solution of RGBSDE $(f, g,\xi, S)$ and $(Y', Z',K')$ is a solution of RGBSDE $(f',g',\xi',S')$. Let us set:
\begin{eqnarray*}
\Delta f = f-f',\; \Delta\xi = \xi-\xi',\; \Delta S = S-S'\\
\Delta Y = Y-Y',\; \Delta Z = Z-Z,\, \Delta K = K-K'
\end{eqnarray*}
and assume that $\Delta S\in L^{p}(dt\times\P)$. Then there exists a constant C such that
\begin{eqnarray*}
\E \left(\sup_{t\in[0,T]}|\Delta Y_{t}|^{p}\right)&\leq& C\E\left[|\Delta\xi|^{p}+\left(\int^{T}_{0}|\Delta f(s, Y_s, Z_s)|ds\right)^{p}\right]\\
&&+\left(\int^{T}_{0}|\Delta g(s, Y_s)|dG_s\right)^{p}+C(\Psi(T))^{1/p}\E\left[\sup_{t\in[0,T]}|\Delta S_{t}|^{p}\right]^{\frac{p-1}{p}},
\end{eqnarray*}
with
\begin{eqnarray*}
\Psi(T)&=& \E\left[|\xi|^{p} +\left(\int^{T}_{0}f^{0}_sds\right)^{p}+\left(\int^{T}_{0}g^{0}_sdG_{s}\right)^{p}+\sup_{t\in[0,T]}(S^{+}_{t})^{p}\right.\\
&&\left.+|\xi'|^{p} +\left(\int^{T}_{0}f'^{0}_sds\right)^{p}+\left(\int^{T}_{0}g'^{0}_sdG_{s}\right)^{p}+\sup_{t\in[0,T]}(S'^{+}_{t})^p\right].
\end{eqnarray*}
\end{lemma}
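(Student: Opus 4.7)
The natural approach is to write down the BSDE satisfied by the difference process and then apply the generalized It\^o formula of Lemma~2.2 to $|\Delta Y_t|^p$. Subtracting the two equations gives
\begin{eqnarray*}
\Delta Y_t &=& \Delta\xi + \int_t^T\!\bigl[f(s,Y_s,Z_s)-f'(s,Y'_s,Z'_s)\bigr]ds + \int_t^T\!\bigl[g(s,Y_s)-g'(s,Y'_s)\bigr]dG_s\\
&& -\int_t^T\Delta Z_s\,dW_s + \Delta K_T-\Delta K_t,
\end{eqnarray*}
and the plan is to decompose each driver into ``difference in the function'' plus ``function of the difference'', so that assumptions $({\bf A2})(iv),(vi)$ can act on the latter and $\Delta f$, $\Delta g$ appear as source terms.

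Applying Lemma~2.2 to $|\Delta Y|^p$ I then bound the three deterministic driver terms as follows. The $f$-contribution splits as $\widehat{\Delta Y}[f(s,Y_s,Z_s)-f(s,Y'_s,Z_s)]+\widehat{\Delta Y}[f(s,Y'_s,Z_s)-f(s,Y'_s,Z'_s)]+\widehat{\Delta Y}\,\Delta f(s,Y_s,Z_s)$, handled by the monotonicity in $y$, the Lipschitz condition in $z$ (with Young's inequality to absorb a fraction of the $|Y|^{p-2}\mathbf 1_{\{Y\neq 0\}}|Z|^2$ term provided by Lemma~2.2), and a source term. The $g$-contribution is treated analogously using $({\bf A2})(vi)$; since $\beta<0$ the corresponding quadratic term actually helps and can be dropped.

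The essential point, and the one I expect to be the main obstacle, is the handling of $\int_t^T|\Delta Y_s|^{p-1}\widehat{\Delta Y}_s\,d\Delta K_s$, because the $K$ increments are only \emph{signed} once one uses minimality. I would exploit $({\bf v})$ of the definition in the following way: on $\mathrm{supp}(dK)$ one has $Y_s=S_s$, hence $\Delta Y_s=S_s-Y'_s\le S_s-S'_s=\Delta S_s$, so $|\Delta Y_s|^{p-1}\widehat{\Delta Y}_s\le(\Delta S_s^+)^{p-1}$; symmetrically on $\mathrm{supp}(dK')$ one gets $-|\Delta Y_s|^{p-1}\widehat{\Delta Y}_s\le(\Delta S_s^-)^{p-1}$. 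Consequently
\begin{eqnarray*}
\int_t^T|\Delta Y_s|^{p-1}\widehat{\Delta Y}_s\,d\Delta K_s \;\le\; \sup_{s\in[0,T]}|\Delta S_s|^{p-1}\bigl(K_T+K'_T\bigr).
\end{eqnarray*}

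For the martingale term $\int_t^T|\Delta Y_s|^{p-1}\widehat{\Delta Y}_s\,\Delta Z_s\,dW_s$ I would use the BDG inequality as in Lemma~2.3, producing a factor $\eta\,\E\sup|\Delta Y|^p$ that is absorbed into the left-hand side, plus a multiple of the remaining $|Y|^{p-2}\mathbf 1_{\{Y\neq 0\}}|Z|^2$ integral already controlled above. Taking expectations, then the supremum in $t$, and choosing all the constants from Young's and BDG carefully small, yields
\begin{eqnarray*}
\E\sup_{t\le T}|\Delta Y_t|^p &\le& C\,\E\Big[|\Delta\xi|^p + \Big(\int_0^T|\Delta f(s,Y_s,Z_s)|ds\Big)^p + \Big(\int_0^T|\Delta g(s,Y_s)|dG_s\Big)^p\Big]\\
&& +\; C\,\E\Bigl[\sup_{t\le T}|\Delta S_t|^{p-1}(K_T+K'_T)\Bigr].
\end{eqnarray*}
Finally I would apply H\"older's inequality with conjugate exponents $p/(p-1)$ and $p$ to the last term, obtaining $\E[\sup|\Delta S|^p]^{(p-1)/p}\,\E[(K_T+K'_T)^p]^{1/p}$, and bound $\E(K_T+K'_T)^p$ by $C\,\Psi(T)$ using the $K$-estimate~(\ref{estY1}) applied to both solutions. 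This produces exactly the stated right-hand side.
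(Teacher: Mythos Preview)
Your plan is correct and follows essentially the same route as the paper: apply the $|y|^p$-It\^o inequality (Lemma~2.2) to $\Delta Y$, split the drivers via the monotonicity/Lipschitz assumptions, bound the reflection term by passing from $\Delta Y$ to $\Delta S$ on the supports of $dK$ and $dK'$, and close with BDG, Young, and H\"older together with the $K$-estimate~\eqref{estY1}. One small bookkeeping point: to land on $\Delta f(s,Y_s,Z_s)$ as in the statement you should run the monotonicity/Lipschitz decomposition through $f'$ rather than $f$ (i.e.\ write $f-f'=\Delta f(\cdot,Y,Z)+[f'(\cdot,Y,Z)-f'(\cdot,Y',Z)]+[f'(\cdot,Y',Z)-f'(\cdot,Y',Z')]$); your written decomposition would instead produce $\Delta f(s,Y'_s,Z'_s)$.
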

\begin{proof}
Using Lemma 2.1 and $({\bf A2})$ we have for all $0\leq t\leq T$:
\begin{eqnarray}
&&|\Delta Y_t|^{p}+c(p)\int_{t}^{T}|\Delta Y_s|^{p-2}{\bf 1}_{\{\Delta Y_s\neq 0\}}|\Delta Z_{s}|^{2}ds\nonumber\\
&\leq& |\Delta\xi|^{p}+p\lambda\int_{t}^{T}|\Delta Y_{s}|^{p-1}\widehat{\Delta Y_s}|\Delta Z_s|ds\nonumber\\
&&+p\lambda\int_{t}^{T}|\Delta Y_{s}|^{p}ds+p\int_{t}^{T}|\Delta Y_{s}|^{p-1}\widehat{\Delta Y_s}|\Delta f(s,Y_s,Z_s)|ds\label{dif}\\
&&+p\beta\int_{t}^{T}|\Delta Y_{s}|^{p}dG_s+p\int_{t}^{T}|\Delta Y_{s}|^{p-1}\widehat{\Delta Y_s}|\Delta g(s,Y_s)|dG_s\nonumber\\
&&+p\int_{t}^{T}|\Delta Y_{s}|^{p-1}\widehat{\Delta Y_s}d(\Delta K_s)-p\int_{t}^{T}|\Delta Y_{s}|^{p-1}\widehat{\Delta Y_s}\Delta Z_sdW_s.\nonumber
\end{eqnarray}
Moreover
\begin{eqnarray*}
\int_{t}^{T}|\Delta Y_{s}|^{p-1}\widehat{\Delta Y_s}d (\Delta K_s)&\leq&\int_{t}^{T}|\Delta S_s|^{p-2}(\Delta S_s){\bf 1}_{\{\Delta S_s\neq 0\}}dK_s\\
&&-\int_{t}^{T}|\Delta S_s|^{p-2}(\Delta S_s){\bf 1}_{\{\Delta S_s\neq 0\}}dK'_s\\
&\leq&\int_{t}^{T}|\Delta S_{s}|^{p-1}d (\Delta K_s)
\end{eqnarray*}
Thus coming back to $(\ref{dif})$ and thanks to the Burkholder-Davis-Gundy and Young inequalities, we get with $t=0$
\begin{eqnarray}
&&\frac{c(p)}{2}\E\int_{0}^{T}|\Delta Y_s|^{p-2}{\bf 1}_{\{\Delta Y_s\neq 0\}}|\Delta Z_{s}|^{2}ds\nonumber\\
&\leq &\E|\Delta\xi|^{p}+(\frac{p\lambda^{2}}{p-1}+p\lambda)\E\int_{0}^{T}|\Delta Y_{s}|^{p}ds\nonumber\\
&&+p\E\int_{0}^{T}|\Delta Y_{s}|^{p-1}|\Delta f(s,Y_s,Z_s)|ds+p\E\int_{0}^{T}|\Delta Y_{s}|^{p-1}|\Delta g(s,Y_s)|dG_s\nonumber\\
&&+p\E\int_{0}^{T}|\Delta S_{s}|^{p-1}d(\Delta K_s)\label{est1}
\end{eqnarray}
and
\begin{eqnarray}
\E|\Delta Y_t|^{p}&\leq &\E|\Delta\xi|^{p}+(\frac{p\lambda^{2}}{p-1}+p\lambda)\E\int_{0}^{T}|\Delta Y_{s}|^{p}ds\nonumber\\
&&+p\E\int_{0}^{T}|\Delta Y_{s}|^{p-1}|\Delta f(s,Y_s,Z_s)|ds+p\E\int_{0}^{T}|\Delta Y_{s}|^{p-1}|\Delta g(s,Y_s)|dG_s\nonumber\\
&&+p\E\int_{0}^{T}|\Delta S_{s}|^{p-1}d(\Delta K_s),\label{est2}
\end{eqnarray}
since we recall again $\beta<0$.

We have by holder's inequality
\begin{eqnarray*}
\E\int_{0}^{T}|\Delta S_{s}|^{p-1}d(\Delta K_s)\leq\left(\E\sup_{0\leq t \leq T}|\Delta S_t|^{p}\right)^{\frac{p}{p-1}}(\Psi_T)^{1/p}
\end{eqnarray*}
and
\begin{eqnarray*}
&&p\E\int_{0}^{T}|\Delta Y_{s}|^{p-1}|\Delta f(s,Y_s,Z_s)|ds+p\E\int_{0}^{T}|\Delta Y_{s}|^{p-1}|\Delta g(s,Y_s)|dG_s\\
&\leq &\gamma\E\sup_{0\leq t\leq T}|\Delta Y_t|^{p}+\frac{1}{\gamma}\E\left[\left(\int_{0}^{T}|\Delta f(s,Y_s,Z_s)|ds\right)^{p}+\left(\int_{0}^{T}|\Delta g(s,Y_s)|dG_s\right)^{p}\right]
\end{eqnarray*}
for any $\gamma>0$. Finally, return again to $(\ref{dif})$ and use  again Burkholder-Davis-Gundy together with inequalities $(\ref{est1})$ and $(\ref{est2})$, it follows
after choosing $\gamma$ small enough:
\begin{eqnarray*}
\E\left(\sup_{0\leq t\leq T}|\Delta Y_t|^{p}\right)&\leq& CE\left[|\Delta\xi|^{p}+\left(\int_{0}^{T}|\Delta f(s,Y_s,Z_s)|ds\right)^{p}+\left(\int_{0}^{T}|\Delta g(s,Y_s)|dG_s\right)^{p}\right]\\
&&+\left(\E\sup_{0\leq t \leq T}|\Delta S_t|^{p}\right)^{\frac{p}{p-1}}(\Psi_{T})^{1/p},
\end{eqnarray*}
which ends the proof.
\end{proof}

\section{Existence and uniqueness of a solution}
\setcounter{theorem}{0} \setcounter{equation}{0}
With the help of the above a priori estimates, we can obtain an existence and
uniqueness result by the use of $L^{\infty}$-approximation.

Firstly, let us give this result which is a slighly extension of Theorem 3.1 of Ren and Xia \cite{Ral}.
\begin{theorem}\label{T1}
Assume $({\bf A1)}$-$({\bf A4)}$. Then RGBSDE with data
 $\left(\xi,f,g,S \right)$  has a unique solution $(Y,Z,K)
  \in \mathcal{S}^{2}
\times\mathcal{M}^{2}\times\mathcal{S}^{2}$.
\end{theorem}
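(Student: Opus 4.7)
The plan is to reduce to Theorem 3.1 of Ren--Xia \cite{Ral} (which treats the Lipschitz case in $y$) by a mollification argument, and to pass to the limit using the stability estimate of Lemma 2.4. First, let $\rho$ be a smooth compactly supported probability density on $\R$ and set $\rho_n(u)=n\rho(nu)$; define
\begin{eqnarray*}
f_n(t,y,z)=\int_{\R}f(t,y-u,z)\,\rho_n(u)\,du,\qquad g_n(t,y)=\int_{\R}g(t,y-u)\,\rho_n(u)\,du.
\end{eqnarray*}
A direct computation shows that $f_n$ and $g_n$ are Lipschitz in $y$ (with constant depending on $n$), preserve the Lipschitz constant $\lambda$ in $z$, preserve the monotonicity constants $\mu$ and $\beta$ exactly, satisfy (A2)(v) and (A2)(vii) with the same $M$ up to an additive term of order $1/n$, and converge to $f,g$ uniformly on compact $y$-sets by (A2)(i).

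Next, I would apply Ren--Xia to each quadruple $(\xi,f_n,g_n,S)$ to obtain a unique triple $(Y^n,Z^n,K^n)\in\mathcal{S}^2\times\mathcal{M}^2\times\mathcal{S}^2$; Lemma 2.3 bounds $(Y^n,Z^n)$ uniformly in $n$ in $\mathcal{S}^2\times\mathcal{M}^2$, and the BSDE representation $K^n_t=Y^n_0-Y^n_t-\int_0^t f_n(s,Y^n_s,Z^n_s)\,ds-\int_0^t g_n(s,Y^n_s)\,dG_s+\int_0^t Z^n_s\,dW_s$ gives the corresponding $\mathcal{S}^2$-bound for $K^n$. I then apply Lemma 2.4 (whose constant depends only on $p,\lambda,\mu,\beta$ and is therefore uniform in $n$) to the pair of solutions for indices $n$ and $m$; since $\Delta\xi=\Delta S=0$, this produces
\begin{eqnarray*}
\E\sup_{t\in[0,T]}|Y^n_t-Y^m_t|^{2}\leq C\,\E\left[\left(\int_0^T|(f_n-f_m)(s,Y^n_s,Z^n_s)|\,ds\right)^{2}+\left(\int_0^T|(g_n-g_m)(s,Y^n_s)|\,dG_s\right)^{2}\right].
\end{eqnarray*}
The integrand tends to $0$ pointwise thanks to (A2)(i) and the uniform-on-compacts convergence of $f_n,g_n$, while the uniform linear growth produces an $L^2$-envelope of the form $f^0_s+M|Y^n_s|+M|Z^n_s|+\mathrm{const}$, which is uniformly integrable by the a priori bounds; dominated convergence then forces the right-hand side to $0$, and $(Y^n,Z^n,K^n)$ is Cauchy in $\mathcal{S}^2\times\mathcal{M}^2\times\mathcal{S}^2$.

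Finally, I let $(Y,Z,K)$ be the limit and pass to the limit in the equation using continuity of $f,g$ in $y$ together with dominated convergence; the obstacle $Y\geq S$ is preserved trivially; the Skorohod flatness $\int_0^T(Y_s-S_s)\,dK_s=0$ follows from the analogous identity at level $n$, the uniform convergence of $Y^n-S$ in $s$ provided by $\mathcal{S}^2$ convergence, and the strong convergence of the increasing processes $K^n\to K$. Uniqueness is then obtained by applying Lemma 2.1 with $p=2$ to $|\Delta Y_t|^2$ for two putative solutions: the monotonicity conditions (A2)(iv) and (A2)(vi) absorb the drift contributions, while the flatness of $K$ and $K'$ together with $Y,Y'\geq S$ forces $(Y_s-Y'_s)\,d(K_s-K'_s)\leq 0$, so Gr\"onwall's inequality gives $\Delta Y\equiv 0$ and thereby $\Delta Z=\Delta K=0$. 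The main obstacle is the Cauchy step: ensuring that $|f_n-f_m|(s,Y^n_s,Z^n_s)\to 0$ in $L^2(ds\otimes d\P)$ along the random, $n$-dependent trajectories $(Y^n,Z^n)$, and the uniform linear growth of the mollified coefficients is the crucial design choice that makes this dominated convergence argument close.
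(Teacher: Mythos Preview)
Your approach is genuinely different from the paper's and contains a circularity in the Cauchy step that does not close as written.

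The paper does not mollify. It uses the Lepeltier--San\,Martin inf-convolution $f_n(x)=\inf_{y}\{f(y)+n|x-y|\}$ (Lemma~\ref{Lemma3.1}), whose key feature is \emph{monotonicity}: $f_n\nearrow f$. Combined with the comparison theorem for RGBSDEs this gives $Y^n\nearrow Y$ pointwise. From that pointwise monotone limit the paper obtains $\int_0^T|Y^n_s-Y_s|^2(ds+dG_s)\to 0$ by dominated convergence, and only then proves $(Z^n)$ Cauchy in $\mathcal{M}^2$ via It\^o's formula and the Skorohod condition. The order matters: convergence of $Y^n$ is established \emph{before} any estimate involving $f_n$ along the moving trajectories.

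In your argument the order is reversed. You invoke Lemma~2.4 and then assert that $|(f_n-f_m)(s,Y^n_s,Z^n_s)|\to 0$ in $L^2(ds\otimes d\P)$ by dominated convergence. But pointwise convergence of this quantity is not available: the evaluation point $(Y^n_s,Z^n_s)$ moves with $n$, and you have no a priori information that it converges (that is precisely what you are trying to prove). Uniform-on-compacts convergence of $f_n\to f$ in $(y,z)$ would suffice only if $(Y^n_s(\omega),Z^n_s(\omega))$ stayed in a fixed compact set for a.e.\ $(s,\omega)$, which the $\mathcal{S}^2\times\mathcal{M}^2$ bounds do not provide; in particular $\sup_n|Z^n_s(\omega)|$ need not be finite. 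Moreover, under (A2) the modulus of continuity of $y\mapsto f(t,y,z)$ is not assumed uniform in $z$, so $\sup_{|y|\le R}|f_n(t,y,z)-f(t,y,z)|$ need not tend to $0$ uniformly in $z$. Your own closing sentence correctly identifies this as ``the main obstacle'', but the linear-growth envelope only supplies uniform integrability, not the required convergence in measure; dominated convergence therefore does not apply.

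If you want to keep the mollification route, you would need an independent device to force convergence of $Y^n$ first (e.g.\ a monotone ordering of the $f_n$, or a weak-compactness/identification argument). The inf-convolution used in the paper is exactly such a device.
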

To prove this theorem, we need an important result which gives an approximation of continuous functions
by Lipschitz functions (see Lepeltier and San Martin \cite{LS} to appear for the proof).
\begin{lemma}\label{Lemma3.1}
Let $f : \R^p\rightarrow\R$ be a continuous function with linear growth, that is, there exists a constant
$K < \infty$ such that $\forall \,x\in \R^p, |f(x)|\leq C(1+|x|)$. Then the sequence of functions $f_n(x) = \inf_{y\in \Q^p}\{f(y)+n|x-y|\}$
is well defined for $n \geq K$ and satisfies
\begin{description}
\item $(a)$\;  Linear growth: $\forall \, x\in\R^p,\;\; |f_n(x)|\leq M(1 + |x|)$,
\item $(b)$ \;  Monotonicity: $\forall \, x\in\R^p,\;\; f_n(x)\nearrow$,
\item $(c)$\;  Lipschitz condition: $\forall \, x, y\in\R^p,\;\; |f_n(x) - f_n(y)|\leq n|x - y|$,
\item $(d)$\,  Strong convergence: if $x_n\rightarrow x$\; as \;$n\rightarrow\infty$, then $f_n(x_n)\rightarrow f(x)$ as $n\rightarrow\infty$.
\end{description}
\end{lemma}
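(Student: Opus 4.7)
The plan is to verify the four properties of the inf-convolution $f_n(x) = \inf_{y \in \Q^p}\{f(y) + n|x-y|\}$ in a convenient order: (b) and (c) first (which are essentially free), then the linear growth (a), which requires the hypothesis $n \ge K$, and finally the strong convergence (d), whose proof splits into a $\limsup$ bound (easy from (a)'s argument) and a $\liminf$ bound (the main technical step).

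For \textbf{(b)}, if $n\le m$ then $f(y)+n|x-y|\le f(y)+m|x-y|$ pointwise in $y$, and taking infimum over $y\in\Q^p$ gives $f_n(x)\le f_m(x)$. For \textbf{(c)}, the triangle inequality gives $f(z)+n|x-z|\le f(z)+n|y-z|+n|x-y|$ for every $z\in\Q^p$; taking infimum in $z$ yields $f_n(x)\le f_n(y)+n|x-y|$, and symmetry finishes the Lipschitz bound.

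For \textbf{(a)}, I would first use the continuity of $f$ together with the density of $\Q^p$ in $\R^p$: picking a sequence $y_k\in\Q^p$ with $y_k\to x$ shows $f_n(x)\le \lim_k[f(y_k)+n|x-y_k|]=f(x)\le K(1+|x|)$. For the lower bound, I exploit the linear growth of $f$ and the triangle inequality $|y|\le|x|+|x-y|$:
\begin{eqnarray*}
f(y)+n|x-y| &\ge& -K(1+|y|)+n|x-y|\\
&\ge& -K(1+|x|)+(n-K)|x-y|\ \ge\ -K(1+|x|),
\end{eqnarray*}
provided $n\ge K$. Taking infimum in $y$ and combining with the upper bound yields $|f_n(x)|\le K(1+|x|)$, so (a) holds with $M=K$.

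For \textbf{(d)}, fix $x_n\to x$. The $\limsup$ bound follows from $f_n(x_n)\le f(x_n)$ (the same density argument as in (a) applied at the point $x_n$) combined with continuity: $\limsup_n f_n(x_n)\le\limsup_n f(x_n)=f(x)$. The main obstacle is the matching $\liminf$ bound, because we must prevent the near-minimizers from drifting away as $n\to\infty$. Choose $y_n\in\Q^p$ with $f(y_n)+n|x_n-y_n|\le f_n(x_n)+1/n$. Using the uniform bound $f_n(x_n)\le K(1+|x_n|)$ from (a) and the lower growth $f(y_n)\ge -K(1+|y_n|)\ge -K(1+|x_n|)-K|x_n-y_n|$, I get
\begin{eqnarray*}
(n-K)|x_n-y_n|\ \le\ 2K(1+|x_n|)+1/n,
\end{eqnarray*}
so $|x_n-y_n|\to 0$ (since $|x_n|$ is bounded) and hence $y_n\to x$. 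Continuity of $f$ then gives $f(y_n)\to f(x)$, and dropping the nonnegative term $n|x_n-y_n|$ yields $\liminf_n f_n(x_n)\ge\liminf_n[f(y_n)-1/n]=f(x)$. This matches the $\limsup$ bound and completes (d).
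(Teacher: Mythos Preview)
Your proof is correct and is the standard inf-convolution argument. The paper does not give its own proof of this lemma at all: it simply refers the reader to Lepeltier and San Martin \cite{LS}, so there is no in-paper argument to compare against; what you have written is essentially the proof found in that reference.
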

{\it Proof of Theorem $\ref{T1}$}
Consider, for fixed $(t,\omega)$, the sequence $(f_n(t,\omega,y,z), g_n(t,\omega,y))$ associated to $(f,g)$ by Lemma $\ref{Lemma3.1}$. Then, $f_n, g_n$ are measurable functions as well as Lipschitz functions. Moreover, since $\xi$ satisfy $({\bf A4})$ and $\{S_t, 0\leq t\leq T\}$
satisfy $({\bf A5})$, we get from Ren and Xia \cite{Ral} that there is a unique triple $\{(Y^n_t,Z^n_t,K^n_t) , 0\leq t\leq T\}$ of ${\cal F}_t$-progressively measurable processes taking values in $\R\times\R^{d}\times\R_+$ and satisfying
\begin{description}
\item $\left(i\right)$  $Y^n$  is a continuous process, \item
$\left(ii\right)$\ $Y_{t}^n=\xi
+\int_{t}^{T}f_n(s,Y^n_{s},Z_{s}^n)ds+\int_{t}^{T}g_n(s,Y_{s}^n)dG_{s}
-\int_{t}^{T}Z^n_{s}dW_{s}+K^n_{T}-K^n_{t},$ \item $\left(iii\right)$\
$Y_{t}^n\geq S_{t}$ \;\; a.s., \item $\left(iv\right)$ $\E\left(\sup_{0\leq
t\leq T}|Y_{t}^n|^{p}+\int_{0}^{T}|Z_{s}^n|^{2}ds\right)<+\infty$, \item$\left(v\right)$\ $K^n$ is a non-decreasing
process such that $K^n_{0}=0$ and
$\int_{0}^{T}(Y_{s}^n-S_{s}^n)dK^n_{s}=0$,\;\; a.s.
\end{description}
Using the comparison theorem of BSDE's in El Karoui et al. \cite{Kal1}, we obtain that
\begin{eqnarray}
\forall\, n\geq m\geq M,\;\, Y^n\geq Y^m,\;  dt\otimes d\P
\mbox{-a.s}.\label{thoecomparison}
\end{eqnarray}
The idea of the proof of Theorem $\ref{T1}$  is to establish that the limit of the sequence $(Y^n,Z^n, K^n)$ is a solution of
the RGBSDE $(\ref{aa})$ with parameters $(\xi,f,g,S)$. It follows by the same step and technics as in \cite{Matoussi}, hence we will outline.

First, there exists a constant $C$ depending only on $M,\, T, \E(\xi^2)$ and $\E(\sup_{0\leq t\leq T}(S_t^{+})^2)$, such that
\begin{eqnarray}
\E\left(\sup_{0\leq t\leq T}|Y^{n}_{t}|^2+\int_{0}^{T}|Z^{n}_s|^2 ds\right)\leq C.\label{estRGBSDE}
\end{eqnarray}

Now, we have from $(\ref{thoecomparison})$ and $(\ref{estRGBSDE})$ respectively, the existence of the process $Y$ such that $Y^n_t\nearrow Y_t,\; 0\leq t\leq T,\; \P$-a.s. and from Fatou's lemma, together with the dominated convergence theorem provide respectively
\begin{eqnarray}
\E\left(\sup_{0\leq t\leq T}|Y^{n}_{t}|^2\right)\leq C\;\; \mbox{and}\;\;\int_{0}^{T}|Y^{n}_s-Y_s|^2( ds+dG_s)\rightarrow 0\label{dominate}
\end{eqnarray}
as $n\rightarrow\infty$.

Now, we should prove that the sequence of processes $Z^n$ converge in $\cal{M}^2$. For all $n\geq m\geq n_{0}\geq M$, from
lt\^{o}'s formula for $t=0$
\begin{eqnarray*}
\E|Y^{n}_0-Y^m_0|^2+\E\int^{T}_0|Z^n_s-Z^m_s|^2ds&=&2\E\int^{T}_0(Y^n_s-Y^m)(f_n(s,Y^n_s,Z^n_s)-f_m(s,Y^m_s,Z^m_s))ds\\
&&+2\E\int^{T}_0(Y^n_s-Y^m)(g_n(s,Y^n_s)-g_m(s,Y^m_s))dG_s\\
&&+2\E\int^{T}_0(Y^n_s-Y^m)(dK^n_s-dK^m_s).
\end{eqnarray*}
Using the fact that for all $n,\ Y^n_t\geq S_t,\; 0 \leq t \leq T$, and from the identity $\int^{T}_{0}(Y^n_t - S_t)dK^n_t = 0$, we have
\begin{eqnarray*}
\E\int^{T}_0|Z^n_s-Z^m_s|^2ds&\leq&2\left(\E\int_{0}^{T}|Y^n_s-Y^m_s|^2ds\right)^{1/2}\E\left(\int_{0}^{T}|f_n(s,Y^n_s,Z^n_s)-f_m(s,Y^m_s,Z^m_s)|^2ds\right)^{1/2}\\
&&+2\left(\E\int_{0}^{T}|Y^n_s-Y^m_s|^2dG_s\right)^{1/2}\E\left(\int_{0}^{T}|g_n(s,Y^n_s)-g_m(s,Y^m_s)|^2dG_s\right)^{1/2},
\end{eqnarray*}
where we have used the H\"{o}lder inequality. By the uniform linear growth condition on the sequence $(f_n,g_n)$ and in virtue of $(\ref{estRGBSDE})$, we obtain the existence of a constant $C$ such that
\begin{eqnarray*}
\forall\, n, m\geq n_0,\;  \E\int^{T}_0|Z^n_s-Z^m_s|^2ds\leq C\E\left(\int_{0}^{T}|Y^{n}_s-Y_s^m|^2 (ds+dG_s)\right).
\end{eqnarray*}
Then from $(\ref{dominate})$, $(Z^n)$ is a Cauchy sequence in $\cal{M}$, and there exists a $\cal{F}_t$-progressively measurable process
$Z$ such that $Z^n\rightarrow Z$ in $\cal{M}^2$, as $n\rightarrow\infty$.

Similarly by It\^{o}'s formula and Davis-Burkholder-Gundy inequality, it follows that
\begin{eqnarray*}
\E\left(\sup_{0\leq t\leq T}|Y^{n}_s-Y_s^m|^2\right)\rightarrow 0
\end{eqnarray*}
as $n,m\rightarrow\infty$, from which we deduce that $\P$-almost surely, $Y^n$ converges uniformly in $t$ to $Y$ and that $Y$ is a
continuous process.

Now according to RGBSDE $(ii)$, and use the same argument as \cite{Matoussi}, we have for all $n, m\geq n_0\geq M$,
we have
\begin{eqnarray*}
\E\left(\sup_{0\leq t\leq T}|K^{n}_s-K_s^m|^2\right)\rightarrow 0
\end{eqnarray*}
as $n,m\rightarrow\infty$. Consequently, there exists a progressively measurable, increasing (with $K_0 = 0$) and a continuous process process $K$ with value in $\R_+$ such
\begin{eqnarray*}
\E\left(\sup_{0\leq t\leq T}|K^{n}_s-K_s|^2\right)\rightarrow 0
\end{eqnarray*}
as $n\rightarrow\infty$.

Finally, taking limits in the RGBSDE $(ii)$ we obtain that the triple $\{(Y_t, Z_t, K_t), \; 0\leq t\leq T\}$ is a solution of
the RGBSDE $(\ref{RGBDSEinitial})$ and satisfy
\begin{description}
\item $\left(1\right)$\
$Y_{t}\geq S_{t}$ \;\; a.s., \item $\left(2\right)$ $\E\left(\sup_{0\leq
t\leq T}|Y_{t}|^{2}+\int_{0}^{T}|Z_{s}|^{2}ds\right)<+\infty$, \item$\left(3\right)$
$\int_{0}^{T}(Y_{s}-S_{s})dK_{s}=0$,\;\; a.s.
\end{description}
$\square$

We now prove our existence and uniqueness result.
\begin{theorem}
Assume $({\bf A1)}$-$({\bf A4)}$. Then RGBSDE with data
 $\left(\xi,f,g,S \right)$  has a unique solution $(Y,Z,K)\in \mathcal{S}^{p}\times\mathcal{M}^{p}\times\mathcal{S}^{p}$.
\end{theorem}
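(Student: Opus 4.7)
The plan is to extend the $L^{2}$-existence of Theorem~\ref{T1} to the $L^{p}$-regime via an $L^{\infty}$-approximation, in the spirit of the strategy announced in the introduction. For each integer $n\geq 1$, let $q_{n}(x)=xn/(|x|\vee n)$ denote the radial projection onto $[-n,n]$, and set $\xi_{n}=q_{n}(\xi)$, $S^{n}_{t}=q_{n}(S_{t})$,
\begin{eqnarray*}
f_{n}(t,y,z)&=&f(t,y,z)-f(t,0,0)+q_{n}(f(t,0,0)),\\
g_{n}(t,y)&=&g(t,y)-g(t,0)+q_{n}(g(t,0)).
\end{eqnarray*}
Because the truncation acts only on the $(0,0)$-values, $f_{n}$ and $g_{n}$ inherit $({\bf A2})$ with the same constants $\beta,\lambda,\mu$; the truncated data is pointwise bounded by $n$ and therefore lies in $L^{2}$; and the monotonicity of $q_{n}$ preserves the barrier condition $S^{n}_{T}\leq\xi_{n}$. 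Theorem~\ref{T1} then furnishes a unique $L^{2}$-solution $(Y^{n},Z^{n},K^{n})\in\mathcal{S}^{2}\times\mathcal{M}^{2}\times\mathcal{S}^{2}$ of the truncated RGBSDE, which is a fortiori an $L^{p}$-solution since $p<2$.

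The next step is to show $(Y^{n},Z^{n},K^{n})$ is Cauchy in $\mathcal{S}^{p}\times\mathcal{M}^{p}\times\mathcal{S}^{p}$. Apply Lemma~2.4 with data $(\xi_{n},f_{n},g_{n},S^{n})$ and $(\xi_{m},f_{m},g_{m},S^{m})$: the differences $\xi_{n}-\xi_{m}$, $q_{n}(f^{0})-q_{m}(f^{0})$, $q_{n}(g^{0})-q_{m}(g^{0})$ and $S^{n}-S^{m}$ are pointwise dominated by $|\xi|$, $f^{0}$, $g^{0}$, $|S|$ respectively and vanish pointwise as $n,m\to\infty$. Combined with $({\bf A2})$(viii), $({\bf A4})$ and $({\bf A5})$, dominated convergence drives each of their $L^{p}$-norms to zero, while $\Psi(T)$ remains uniformly bounded in $n,m$. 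This yields $\E\sup_{t}|Y^{n}_{t}-Y^{m}_{t}|^{p}\to 0$. Lemma~\ref{L2} applied to the difference then controls $\E(\int_{0}^{T}|Z^{n}_{s}-Z^{m}_{s}|^{2}ds)^{p/2}$ by $\E\sup_{t}|Y^{n}_{t}-Y^{m}_{t}|^{p}$ plus the same vanishing data terms, hence $(Z^{n})$ is Cauchy in $\mathcal{M}^{p}$; the Cauchy property of $(K^{n})$ in $\mathcal{S}^{p}$ then follows from the RGBSDE equation itself together with the BDG inequality applied to the stochastic integral.

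Call the limit $(Y,Z,K)$. The main obstacle lies in passing to the limit in the nonlinear term $\int_{t}^{T}f_{n}(s,Y^{n}_{s},Z^{n}_{s})ds$ in the absence of Lipschitz continuity in $y$: one cannot control $|f_{n}(s,Y^{n}_{s},Z^{n}_{s})-f(s,Y_{s},Z_{s})|$ linearly by $|Y^{n}-Y|+|Z^{n}-Z|$. The remedy is to combine the $z$-Lipschitz property $({\bf A2})$(iii), the $y$-continuity $({\bf A2})$(i), the linear growth bounds $({\bf A2})$(v),(vii), and the modulus $\pi_{r}$ from $({\bf A3})$: after extracting an almost-surely convergent subsequence and localising on $\{\sup_{t}|Y^{n}_{t}|\vee|Y_{t}|\leq r\}$, a Vitali-type uniform-integrability argument based on the $\mathcal{S}^{p}\times\mathcal{M}^{p}$ convergence and $\pi_{r}\in L^{p}$ allows the limit to be taken in $L^{1}$. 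The barrier condition $Y\geq S$ and the Skorokhod condition $\int_{0}^{T}(Y-S)dK=0$ pass to the limit from the uniform convergence of $Y^{n},K^{n},S^{n}$, so $(Y,Z,K)$ solves the original RGBSDE with data $(\xi,f,g,S)$. Uniqueness follows immediately from Lemma~2.4 applied with identical data, which forces $\E\sup_{t}|Y_{t}-Y'_{t}|^{p}=0$; matching martingale and finite-variation parts in the BSDE then identifies $Z=Z'$ and $K=K'$.
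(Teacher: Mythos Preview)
Your argument follows essentially the same route as the paper's: truncate the data via the radial projection $q_{n}$, obtain $L^{2}$-solutions for the approximants, use Lemma~2.4 for the Cauchy property in $\mathcal{S}^{p}$, and pass to the limit; uniqueness via Lemma~2.4 is identical. The paper, however, organises the existence proof in two stages. Its Step~1 first treats \emph{bounded} data by a separate $\theta_{r}$-cutoff/$q_{n}(z)$-truncation scheme applied to the generator (defining $h_{n}$), which yields approximate solutions with a uniform $L^{\infty}$ bound $\|Y^{n}\|_{\infty}\le r$; only then does Step~2 perform the $q_{n}$-truncation of the data, invoking Step~1 (rather than Theorem~\ref{T1}) to produce the approximants. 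Your shortcut of applying Theorem~\ref{T1} directly to the truncated data and skipping Step~1 is legitimate, since the truncated data is bounded and Theorem~\ref{T1} already delivers non-Lipschitz $L^{2}$ existence. What the paper gains from Step~1 is the $L^{\infty}$ control on $Y^{n}$, which lets it pass to the limit in $\int f_{n}(s,Y^{n}_{s},Z^{n}_{s})\,ds$ by a plain dominated-convergence argument; you compensate for the absence of that bound with the localisation/Vitali argument based on $({\bf A3})$.

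One caveat: Lemma~\ref{L2} is stated for a single RGBSDE solution, and $\Delta K=K^{n}-K^{m}$ is not non-decreasing, so you cannot invoke it verbatim for the difference. The estimate does go through (the proof only uses It\^o's formula and the bound \eqref{estK}, both of which survive for signed finite-variation $A$), but you should say that you are re-running its proof rather than applying the lemma as stated. The paper is equally informal at this point in Step~2.
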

\begin{proof}
\noindent{\bf Uniqueness}
\newline
Let us consider $(Y,Z,K)$ and $(Y',Z',K')$ two solutions of RGBSDE with data $\left(\xi,f,g,S \right)$ in the appropriate space. Using Lemma 2.4 (since $\Delta S= 0 \in L^{p},\, \Delta\xi=\Delta f=\Delta g=0$), we obtain immediately $Y = Y'$.
Therefore we have also $Z = Z'$ and finally $K = K'$, whence uniqueness follows.

Let us turn to the existence part. In order to simplify the calculations, we will
always assume that condition $({\bf A2}$-$iv)$ is satisfied with $\mu\leq 0$. If it is not true, the change
of variables $\tilde{Y}_t =e^{\mu\ t}Y_t,\; \tilde{Z}_t= e^{\mu\ t}Z_t,\; \tilde{K}_t= e^{\mu\ t}K_t$ reduces to this case

{\bf Existence} Since, the function $f$ is non-Lipschitz, the proof will be split into two steps
\newline
{\bf Step 1.} In this part $\xi,\, \sup f^{0}_{t},\, \sup g^{0}_{t},\,
\sup S_{t}^{+}$ are supposed bounded random variables and $r$ a
positive real such that
\begin{eqnarray*}
\sqrt{e^{(1+\lambda^{2})T}}(\|\xi\|_{\infty}+T\|f^{0}
\|_{\infty}+\|G_{T}\|_{\infty}\|g^{0}\|_{\infty}
+\|S^{+}\|_{\infty}) &<& r.
\end{eqnarray*}
Let $\theta_{r}$ be a smooth function such that $0\leq
\theta_{r}\leq 1$ and
\begin{eqnarray*}
\theta_{r}(y)=\left\{ \begin{array}{l}
1\text{ for }|y|\leq r \\
\\
0\text{ for }|y|\geq r+1.
\end{array}
\right.
\end{eqnarray*}
For each $n\in\N^{*}$, we denote $q_{n}(z)=z\frac{n}{|z|\vee n }$ and set
\begin{eqnarray*}
h_{n}(t,y,z)&=&\theta_{r}(y)(f(t,y,q_{n}(z))-f_{t}^{0})\frac{n}{\pi_{r+1}
(t)\vee n}+f_{t}^{0}.
\end{eqnarray*}
According to the same reason as in \cite{BDPS}, this function still satisfies quadratic condition $({\bf A2}$-$iv)$ but with a positive constant i.e
there exists $\kappa >0 $ depending on $n$ such that
\begin{eqnarray*}
(y-y')(h_{n}(t,y,z)-h_{n}(t,y',z))&\leq& \kappa|y-y'|^2.
\end{eqnarray*}
Then $(\xi,h_{n},g,S)$ satisfies assumptions of Theorem 3.1. Hence, for each
$n\in\N$, the reflected generalized BSDE associated to $(\xi,h_{n},g,S)$ has a unique solution
$(Y^{n},Z^{n},K^{n})$ belong in space $\mathcal{S}^{2}
\times\mathcal{M}^{2}\times\mathcal{S}^{2}$.\newline Since
\begin{eqnarray*}
y\ h_{n}(t,y,z)&\leq& |y|\ \|f^{0}\|_{\infty}+\lambda|y|\ |z|
\end{eqnarray*}
and $\xi,\ S$ and $G$ are bounded, the similar computation of Lemma $2.2$ in
\cite{BC} provide that the process $Y^{n}$ satisfies the inequality $\|Y^{n}\|_{\infty}\leq r$. In addition, from Lemma 2.2, $\|Z^{n}\|_{\mathcal{M}^{2}}\leq
r'$ where $r'$ is another constant. As a byproduct
$(Y^{n},Z^{n},K^{n})$ is a solution to the reflected generalized
BSDE associated to $(\xi,f_{n},g,S)$ where
\begin{eqnarray*}
f_{n}(t,y,z)&=&(f(t,y,q_{n}(z))-f_{t}^{0})\frac{n}{\pi_{r+1}
(t)\vee n}+f_{t}^{0}
\end{eqnarray*}
which satisfied assumption $({\bf A2}$-$iv)$ with $\mu\leq 0$.
\newline
We now have, for $i\in\N$, setting
$
\begin{array}{l}
\bar{Y}^{n,i}=Y^{n+i}-Y^{n},\ \bar{Z}^{n,i}=Z^{n+i}-Z^{n},\
\bar{K}^{n,i}=K^{n+i}-K^{n},
\end{array}
$
applying the similar argument as Lemme 2.3, we obtain
\begin{eqnarray*}
&&\Phi(t)|\bar{Y}_{t}^{n,i}|^{2}+\frac{1}{2}\int_{t}^{T}
\Phi(s)|\bar{Z}_{s}^{n,i}|^{2}ds  \\
&\leq&
2\int_{t}^{T}\Phi(s)\bar{Y}_{s}^{n,i}(f_{n+i}(s,Y_{s}^{n},Z_{s}^{n})
-f_{n}(s,Y_{s}^{n},Z_{s}^{n}))ds \\
&&+2\int_{t}^{T}\Phi(s)\bar{Y}_{s}^{n,i}d\bar{K}_{s}^{n}-
2\int_{t}^{T}\Phi(s)\bar{Y}_{s}^{n,i}\bar{Z}^{n,i}dW_{s},
\end{eqnarray*}
where for $\alpha>0,\,\Phi(s)=\exp(2\lambda^{2} s)$.
But $\|\bar{Y}^{n,i}\|_{\infty}\leq 2r$ so that
\begin{eqnarray*}
&&\Phi(t)|\bar{Y}_{t}^{n,i}|^{2}+\frac{1}{2}\int_{t}^{T}
\Phi(s)|\bar{Z}_{s}^{n,i}|^{2}ds  \\
&\leq& 4r\int_{t}^{T}\Phi(s)|f_{n+i}(s,Y_{s}^{n},Z_{s}^{n})
-f_{n}(s,Y_{s}^{n},Z_{s}^{n})|ds\\
&&+2\int_{t}^{T}\Phi(s)\bar{Y}_{s}^{n,i}d\bar{K}_{s}^{n,i}-
2\int_{t}^{T}\Phi(s)\bar{Y}_{s}^{n,i}\bar{Z}^{n,i}dW_{s}
\end{eqnarray*}
and using the BDG inequality, we get, for a constant $C$ depending only on $\lambda,\ \mu$ and $T$,
\begin{eqnarray}
&&\E\left(\sup_{0\leq t\leq T}
|\bar{Y}_{t}^{n,i}|^{2}+\int_{0}^{T}
|\bar{Z}_{s}^{n,i}|^{2}ds\right) \nonumber \\&\leq&
Cr\E\left\{\int_{0}^{T}|f_{n+i}(s,Y_{s}^{n},Z_{s}^{n})
-f_{n}(s,Y_{s}^{n},Z_{s}^{n})|ds\right\}.\label{c4}
\end{eqnarray}
On the other hand, since $\|Y^{n}\|_{\infty}\leq r$, we get
\begin{eqnarray*}
|f_{n+i}(s,Y_{s}^{n},Z_{s}^{n})-f_{n}(s,Y_{s}^{n},Z_{s}^{n})|
&\leq& 2\lambda|Z^{n}_s|{\bf 1}_{\{|Z^{n}_s|\ >n\}}
+2\lambda|Z^{n}_s|{\bf
1}_{\{\pi_{r+1}(s)>n\}}\\
&&+2\pi_{r+1}(s){\bf 1}_{\{\pi_{r+1}(s)>n\}}
\end{eqnarray*}
from which we deduce, according assumption $(\bf A3)$ and
inequality $(\ref{c4})$ that $(Y^{n},Z^{n})$ is a cauchy sequence
in the Banach space $\mathcal{S}^{2}\times \mathcal{M}^{2}$. Let $(Y,Z)$ its limit in $\mathcal{S}^{2}\times \mathcal{M}^{2}$,
then for all $0\leq t\leq T$,\,$Y_{t}\geq S_{t}$ a.s..

Next, let us define
\begin{eqnarray}
K_{t}^{n}=Y_{0}^n-Y_{t}^n-\int_{0}^{t}f_n(s,Y_{s}^{n},Z_{s}^{n})ds
-\int_{0}^{t}g(s,Y_{s}^{n})dG_{s}+\int_{0}^{t}Z_{s}^{n}dW_{s}.
\label{K}
\end{eqnarray}
By the convergence of $Y^n,$ (for a subsequence), the fact that
$f, g$ are continuous and
\begin{itemize}
\item $\sup_{n\geq0}|f(s,Y^n_s,Z_s)|\leq f_s+K\left\{(\sup_{n\geq0}|Y_s^n|)+|Z_s|\right\}$,
\item $\sup_{n\geq0}|g(s,Y^n_s)|\leq g_s+K\left\{(\sup_{n\geq0}|Y^n_s|)\right\}$
\item $\mathbb{E}\int_0^T|f(s,Y^n_s,q_n(Z^n_s))-f(s,Y^n_s,Z_s)|^2ds\leq C\mathbb{E}\int_0^T|q_n(Z^n_s)-Z_s|^2ds$
\end{itemize}
we get the existence of a  process $K$ which verifies for all
$t\in[0,T]$
$$\E\left| K_{t}^{n}-K_{t}^{{}}\right| ^{2}\longrightarrow 0.$$

Moreover
\begin{eqnarray*}
\int_{0}^{T}(Y_{s}-S_{s})dK_{s}=0,\ \mbox{for every}\  T\geq
0.
\end{eqnarray*}
It is easy to pass to the limit in the
approximating equation associated to $(\xi,f_{n},g,S)$, yielding $(Y,Z,K)$ as a solution of reflected generalized BSDE associated to data
$(\xi,f,g,S)$.

\noindent{\bf Step 2.}\ We now treat the general
case.

For each $n\in\N^{*}$, let us denote
\begin{eqnarray*}
\xi_{n}= q_{n}(\xi),\,f_{n}(t,y,z)= f\left(t,y,z\right)-f^{0}_{t}+q_{n}(f^{0}_{t}),\\
g_{n}(t,y)= g\left(t,y\right)-g^{0}_{t}+q_{n}(g^{0}_{t}),\,S_{t}^{n}= q_{n}(S_{t}).
\end{eqnarray*}
For each $n\in\N^{*}$, RGBSDE associated with $(\xi_{n},f_{n},g_{n},S^{n})$ has a
unique solution $(Y^{n},Z^{n},K^{n})\in L^{2}$\ thanks to the
first step of this proof, but in fact also in  $L^{p},\ p\ > \ 1$ according the Lemma 2.3. Now from Lemma 2.4, for $(i,n)\in \N\times\N^{*}$,
\begin{eqnarray*}
&&\E\left\{\sup_{0\leq t\leq T}|Y_{t}^{n+i}-Y_{t}^{n}|^{p}+
\left(\int_{0}^{T}|Z_{s}^{n+i}-Z_{s}^{n}|^{2}ds\right)^{p/2}\right\}\\
&\leq& C\E\left\{|\xi_{n+i}-\xi_{n}|^{p}+\int_{0}^{T}
|q_{n+i}(f^{0}_{s})-q_{n}(f^{0}_{s})|^{p}ds\right.\\
&&+\left.\int_{0}^{T}
|q_{n+i}(g^{0}_{s})-q_{n}(g^{0}_{s})|^{p}dG_{s}+\sup_{0\leq t\leq
T}|q_{n+i}(S_{t})-q_{n}(S_{t})|^{p}\right\},
\end{eqnarray*}
where $C$ depends  on $T$ and $\lambda $.
The right-hand side of the last inequality clearly tends to $0$ as $n \longrightarrow \infty,$
uniformly on $i$  so that $(Y^{n},Z^{n})$ is again a cauchy sequence in
$\mathcal{S}^{p}\times\mathcal{M}^{p}$. Let us denote by $(Y,Z)\in \mathcal{S}^{p}\times\mathcal{M}^{p}$ it limit.
Then it follows from identical computation as previous that,
there exists a non-decreasing process $K (K_{0}=0)$ such that
\begin{eqnarray*}
\E\left(
|K_{t}^{n}-K_{t}|^{p}\right)\longrightarrow 0, as \ n
\longrightarrow \infty
\end{eqnarray*}
and
\begin{eqnarray*}
\int_{0}^{T}(Y_{s}-S_{s})dK_{s}=0,\ \mbox{for every}\  T\geq
0.
\end{eqnarray*}
It is easy to pass to the limit in the
approximating equation, yielding that the triplet $(Y,Z,K)$ is a $L^{p}$-solution of RGBSDEs with determinist time associated to $(\xi,f,g,S)$.
\end{proof}

\label{lastpage-01}
%

%
\bibliography{database}                               %
\end{document}